\definecolor{darkgreen}{rgb}{0.0,0,0.9}
\let\chapter\section
\DeclareMathAlphabet{\mathpzc}{OT1}{pzc}{m}{it}
\newtheorem{propo}{Proposition}
\newtheorem{lemma}[propo]{Lemma}
\newtheorem{coro}[propo]{Corollary}
\newtheorem{thm}[propo]{Theorem}
\theoremstyle{definition}
\newtheorem{assumption}{Assumption}
\newtheorem{defi}{Definition}
\newtheorem{rmk}{Remark}
\newtheorem{example}{Example}
\def\tp{{\tilde{p}}}
\def\reals{{\mathbb R}}
\def\eps{{\varepsilon}}
\def\prob{{\mathbb P}}
\def\E{{\mathbb E}}
\def\L0{{L_i}}
\def\<{\langle}
\def\>{\rangle}
\def\F{{\sf F}}
\def\ind{{\mathbb I}}
\def\F{{\sf F}}
\def\sT{{\sf T}}
\def\hz{\widehat{z}}
\def\v*{v_i}
\def\T*{T_i}
\def\u*{u_i}
\def\F*{F_i}
\def\bR{{\mtx{R}}}
\def\l1u{W}
\newcommand{\ajcomment}[1]{}
\newcommand{\labitem}[2]{%
\def\@itemlabel{\text{#1}}
\item
\def\@currentlabel{#1}\label{#2}}
\DeclareMathAlphabet{\mathpzc}{OT1}{pzc}{m}{it}
\def\cP{\mathcal{P}}
\def\unif{\mathsf{Unif}}
\def \tp{\tilde{p}}
\def \fC{\mathfrak{C}}
\def \conv{\mathsf{conv}}
\def \sH{d_\mathsf{H}}
\def \bR{\mathbf{R}}
\def \hopt{\widehat{\mathsf{R}}}
\def \myom{\mathsf{DM}}
\def \mydp{\mathsf{PM}}
\def \To{\rightrightarrows}
\def\bC{\mathbf{C}}
\newcommand{\RR}{\mathbb{R}}
\newcommand{\p}[1]{\left(#1\right)}
\newcommand{\cb}[1]{\left\{#1\right\}}
\newcommand{\EE}[2][]{\mathbb{E}_{#1}\left[#2\right]}
\newcommand{\argmin}{\operatorname{argmin}}
\newcommand{\argmax}{\operatorname{argmax}}
\newcommand\blfootnote[1]{%
  \begingroup
  \renewcommand\thefootnote{}\footnote{#1}%
  \addtocounter{footnote}{-1}%
  \endgroup
}
\title{Optimal Mechanisms for Demand Response:\\ An Indifference Set Approach}
\author{Mohammad Mehrabi \\ \texttt{mehrabi@stanford.edu}
\and Omer Karaduman\\  \texttt{omerkara@stanford.edu} \and Stefan Wager \\ \texttt{swager@stanford.edu}}
\date{Draft version \today}
\begin{document}

\maketitle


\begin{abstract}
The time at which renewable (e.g., solar or wind) energy resources produce electricity cannot generally
be controlled. In many settings, however, consumers have some flexibility in their energy consumption needs,
and there is growing interest in demand-response programs that leverage this flexibility to shift energy
consumption to better match renewable production---thus enabling more efficient utilization of these resources.
We study optimal demand response in a setting where consumers use home energy management systems (HEMS) to autonomously adjust their electricity consumption. Our core assumption is that HEMS operationalize flexibility by querying the consumer for their preferences and computing the ``indifference set'' of all energy consumption profiles that can be used to satisfy these preferences. Then, given an indifference set, HEMS can respond to grid signals while guaranteeing user-defined comfort and functionality; e.g., if a consumer sets a temperature range, a HEMS can precool and preheat to align with peak renewable production, thus improving efficiency without sacrificing comfort. We show that while price-based mechanisms are not generally optimal for demand response, they become asymptotically optimal in large markets under a mean-field limit. Furthermore, we show that optimal dynamic prices can be efficiently computed in large markets by only querying HEMS about their planned consumption under different price signals. Using an OpenDSS-powered grid simulation for Phoenix, Arizona, we demonstrate that our approach enables meaningful demand response without creating grid instability.
\end{abstract}

\section{Introduction}

Consumer electricity demand is largely unmanaged:\blfootnote{\hspace{-7mm} We are grateful to Andrey Bernstein,  Mahmoud Saleh, Jing Shang and Xinyang Zhou for valuable feedback and numerous helpful conversations; and also thank Xinyang Zhou for helping us get set up with the OpenDSS grid simulator. This research was funded by a grant from Trellis Climate (Prime Coalition) to advance deep demand flexibility solutions and a grant from the Office of Naval Research.}
In most markets today, retail customers obtain electricity via fixed-rate contracts that do not adapt to real-time grid conditions or reflect the true costs of electricity production.\footnote{Many regions
also run wholesale energy markets where electricity is traded at floating prices between
utilities, power producers, municipalities, etc.~\citep{wolak2021wholesale}. End consumers,
however, generally do not trade directly on these markets; and exposing consumers to wholesale
prices is generally considered to lead to unacceptable risks, as illustrated by the Griddy Energy case below.}
This unmanaged demand is in tension with renewable energy resources, such as solar and wind:
Unlike conventional energy resources, renewables yield intermittent production and cannot be
dispatched on demand, thus leading to frequent mismatches between electricity production
and consumption \citep{gowrisankaran2016intermittency,cordera2023unit}.
This supply-demand mismatch then often forces the grid to depend on expensive and less efficient
power plants during peak demand periods, leading to increased operational expenses and
environmental impacts \citep{bird2013integrating, IMPRAM2020100539, michaelides2019fossil}.

As renewable energy sources like solar and wind become more prevalent---for example, in the U.S., electricity demand is expected to grow by approximately 1\% annually from 2022 to 2050, with renewable sources projected to supply 44\% of electricity by 2050, up from 21\% in 2021 \citep{eia2023aeo}---these inefficiencies are projected to intensify further,  mainly due to the mismatch between renewable energy availability and peak demand periods.
This mismatch is further exacerbated by the growing adoption of rooftop solar panels, which allow households to both generate and consume electricity, thereby causing local demand profiles to fluctuate unpredictably. These fluctuations require more sophisticated balancing mechanisms to maintain grid stability \citep{caballero2022distributed}, highlighting the need for more adaptive and responsive demand management solutions.

These challenges have led to increasing interest in demand-response programs that seek to leverage consumer flexibility
to achieve demand patterns better aligned with renewable production \citep{yoon2014demand,godina2018model,siano2014demand}. For example, demand response programs may encourage consumers to engage in pre-cooling: On a hot day, it may be advantageous to apply air conditioning early in the afternoon while solar production is plentiful, thus enabling reduced electricity consumption during evening peak hours. Similarly, these programs may incentivize households returning home to shift electric vehicle charging to nighttime hours when demand is lower.

However, real-world implementations of demand response programs have encountered
significant limitations in practice. In one of the most high profile applications of demand response, California's
state grid operator has deployed ``Flex Alert'' text message alerts to urge residents to reduce electricity use during peak demand periods,
especially during extreme heat
waves.\footnote{\url{https://www.latimes.com/california/story/2022-09-07/a-text-asked-millions-of-californians-to-save-energy-they-listened-averting-blackouts}}
While this method has at times effectively prompted immediate public action, it relies heavily on voluntary participation and may not
consistently achieve predictable load reductions. In particular, consumers showed evidence of messaging fatigue following repeated use of
Flex Alerts in 2022; and not all consumers were able to correctly implement some grid-supporting measures (such as pre-cooling) even
when they attempted to comply with a Flex Alert \citep{opiniondynamics2024}. As such, it may be implausible to hope to achieve large-scale,
systematic changes to consumer behavior via alert-based methods that rely on consumer goodwill alone.

At the other end of the spectrum, some jurisdictions have explored eliminating fixed-rate energy contracts and instead directly exposing
consumers to wholesale energy pricing.
For example, in 2017, a company called Griddy started offering consumers in Texas the option to
directly purchase energy at wholesale prices.\footnote{\url{https://www.caller.com/story/news/local/texas/state-bureau/2021/03/01/griddy-electric-bills-spiked-during-freeze-explanation/4549742001/}}
However, this system broke down when Texas faced electricity shortages during severe winter storms in
2021:\footnote{\url{https://www.nytimes.com/2021/02/20/us/texas-storm-electric-bills.html}}
Wholesale energy prices spiked $\sim$300 times higher than usual and Griddy was forced into bankruptcy
as many consumers were unable to pay their bills \citep{busby2021cascading}.
In the aftermath of Griddy's bankruptcy, consensus formed that exposing consumers
to unpredictable price fluctuations of this magnitude was not acceptable,
and Texas passed a law that prohibits directly exposing residential customers to wholesale energy prices \citep{TX_HB16_2021}.

The Electric Reliability Council of Texas (ERCOT), which oversees the Texan power grid,
has also designed a more targeted demand-response program, in which some loads are pre-registered as ``Emergency Response Service'' (ERS)
loads that can be curtailed---in exchange for payment---during energy shortages \citep{baldick2021variability}. Conceptually,
these ERS loads can then be understood as ``virtual power plants'' in the sense that, when the grid to faces an energy shortage, they can choose to either
increase generation capacity or pay the ERS to curtail \citep{mnatsakanyan2014novel}. Unfortunately, however, such
pay-to-curtail schemes appear in practice to be gameable, especially by market participants that are able to flexibly schedule (and
cancel) loads at peak hours.
For example, {\it The Economist} recently reported that some cryptocurrency companies were able to collect demand-response payments of the
same order of magnitude as their overall revenue:\footnote{\url{https://www.economist.com/united-states/2024/08/27/why-texas-republicans-are-souring-on-crypto}}
\begin{quote}
{\it ``On the hottest and coldest days, when demand for electricity peaks and the price rockets, the bitcoin miners sell power back to providers at a profit or stop mining for a fee, paid by
ERCOT. Doing so has become more lucrative than mining itself. In August of 2023 Riot\footnote{Riot Platforms is a publicly traded cryptocurrency company.}
collected \$32m from curtailing mining and just \$8.6m from selling bitcoin.''}
\end{quote}
Incentive-compatibility issues can also arise in more mundane settings. For example,
Pacific Gas \& Energy in California has experimented with a program called SmartAC
which compensates consumers for not using their air conditioners during system-wide
or local system strain \citep{pge2023smartac}; however, it is hard to stop such programs
from compensating consumers at times when they never intend to use their air conditioners
anyways (e.g., while they are traveling).



These challenges highlight the need for practical mechanisms for demand response that make it
financially rational for consumers to respond to demand-response signals, that do not subject
consumers to excessive risks or attention burdens, and that are simple and transparently non-gameable.
The goal of this paper is to outline such a system, and to establish its basic operating characteristics and
optimality properties in settings with many customers.
The core of our approach involves modeling consumer flexibility using an ``indifference set'' approach, whereby
each consumer has a pre-determined set of goals\footnote{The fact that we take the consumer indifference
sets as given reflects the fact that we are not trying to change consumer preferences (e.g., to accept
less comfortable indoor temperatures in order to save energy), but rather to leverage flexibility
inherent to existing preferences to achieve meaningful demand response.}
they want to have met (e.g., keep their indoor temperature
comfortable, have their electric vehicle charged by morning, etc.), but is indifferent to how these goals
are accomplished. We further assume that each consumer has access to a device, called a Home Energy Management
System (HEMS),\footnote{Devices of this type have been widely discussed in the literature
\citep{beaudin2015home, zhou2016smart}; and in fact many consumers already own smart thermostats
whose complexity is comparable to what we require of a HEMS \citep{stopps2021residential}, like Google Nest.}
that can receive price signals from the grid and rationally control devices such as to minimize
energy costs subject to meeting the consumer's goals.\footnote{Essentially, the HEMS needs to be
able to form and solve a linear program to plan optimal consumption.}
The key result of our paper is that, under this model and assuming that consumers are using HEMS to control their devices,
the grid can deploy pre-announced, time-varying energy prices to accomplish near optimal demand response under a mean-field limit.

The use of time-varying energy prices, also known as time-of-use (TOU) pricing, is one of the oldest ideas for
demand response, and discussions of TOU pricing go back to the 19th century \citep{hausman1984time}.
Several recent studies of TOU pricing are skeptical about their potential for meaningful demand
response \citep{schittekatte2024electricity}. Many of these limitations, however, are tied to specifics of existing TOU designs.
Most TOU programs that have been deployed to date have involved simple heuristics
(e.g., dividing each day into ``daytime'', ``evening'' and ``night'' and using constant electricity prices within
each of these 3 periods), and then relied on consumers to figure out themselves how to respond to these price signals
\citep{cosmo2014estimating}. There are limits to how much such simple TOU programs can shift demand. First of all, they
place considerable attention burden on consumers, and even when consumers do attempt to respond to these TOU prices
they will generally not have the sophistication to do so optimally \citep{allcott2011rethinking}. Second, if consumers were to procure technological
devices that enabled them to actually best-respond to such simple TOU prices, new unstable and/or undesirable demand patterns may emerge \citep{sunar2024electricity}: For
example, if everyone times their dishwasher to go on right after high ``evening'' prices turn to lower ``night'' prices, then
this will create a new demand spike the grid will need to address.

Meanwhile, in the academic literature, there has been considerable interest in deriving optimal
time-varying prices under generic choice models where each consumer has a (smooth, concave) welfare function
capturing their benefits from energy use \citep{datchanamoorthy2011optimal,yang2012game,peura2015dynamic,zhou2017incentive,adelman2019dynamic}.
Results in this space are of considerable conceptual interest (see the related work section for further discussion);
however, there are obstacles to putting the proposed algorithms directly to use in practice.
Asking consumers to explicitly reveal their welfare functions would require them to solve a complicated
reasoning task, and is thus likely not realistic in practice \citep{friedman1995complexity,hobman2016uptake}.
Additionally, reduced-form calibration of optimal pricing models requires measuring consumer demand elasticities
that may be hard to measure \citep{mnatsakanyan2014novel}.

The modeling assumptions we take in this paper, i.e., with indifference sets to capture flexibility and
HEMS to automatically control devices, considerably help alleviate issues with existing time-of-use pricing approaches.
We believe it to be far more practical and realistic to ask a consumer to set hard constraints on desired outcomes
than to elicit a complete welfare function; for example, it's easier for a consumer to say they'll accept
indoor temperature between 21--24\textdegree C during the day and 20--22\textdegree C at night than to specify
exactly how their experienced welfare depends on temperature over time. With HEMS, consumers can program their preferences once and avoid actively responding to price signals, all while preserving privacy by not sharing sensitive information with third parties.\footnote{As discussed further below, our learning results allow HEMS to
operate under a one-way communication model where the HEMS receives prices from the grid and reacts to them, but does not
send any information in response (other than the actual energy consumption).} Finally, automation allows the grid
to use prices that change frequently / gradually enough to avoid derived instability around price changes.

These assumptions, however, introduce novel analytical challenges. Under our indifference set model, consumer demand typically varies discontinuously with prices, rendering many technical tools from existing literature inapplicable \citep[e.g.,][]{adelman2019dynamic,yang2012game,zhou2017incentive}. The bulk of this paper demonstrates how classical results on convex analysis over random sets---building on the foundational work of \citet{aumann1965integrals} and \citet{artstein1975strong}---can be leveraged to address these challenges and establish the validity of our approach in large markets. Specifically, we show the following:
\begin{enumerate}
    \item We start by comparing our proposed pricing mechanism to a ``direct control mechanism''
    where the HEMS delegates full control of all user devices to the grid, thus enabling solution
    of a single global optimization problem that guarantees first-best demand response w.r.t. indifference-sets.
    We find that, under our model for demand response, the pricing mechanism described above is
    generally not optimal relative to the direct control mechanism, i.e., that a grid operator with direct control
    of all consumer devices can achieve better demand response (while respecting user indifference sets)
    that an operator who can only control demand via dynamic prices.\footnote{This
    finding may seem surprising at first glance, as there are
numerous welfare theorems in economics demonstrating that price-based mechanisms can achieve optimal welfare
in various systems \citep{mascolell1995microeconomic}. Our indifference-set model, however, is inconsistent
with a key assumption of these welfare theorems, often referred to as local non-satiation.
Local non-satiation implies that consumer demand moves smoothly with prices---but this is not
the case here, as in our setting we only consider hard constraints on the user side.}
    \item As the market size increases, the optimality gap of pricing mechanisms shrinks to zero.
    In the mean-field limit with infinitely many customers, the grid cost for both pricing and direct
    control mechanisms converge, and the price-based mechanisms becomes optimal.
    \item Mean-field optimal prices can be computed via a first-order algorithm that only requires querying HEMS about planned consumption profiles at different price levels, as these consumption profiles give information about a dual cost function. This allows for the straightforward application of first-order optimization methods to find the optimal price using only market-level data, without the grid needing to directly query for individual user preferences.
\end{enumerate}
Finally, we demonstrate practical promise of our pricing approach on a grid-simulation motivated
by pre-cooling with data from Phoenix, Arizona; we use OpenDSS to solve the relevant power-flow equations.
As seen in Figure \ref{fig: duck}, renewable production is heavily concentrated during daytime hours.
With flat rates, this results in the notorious ``duck curve'' with low (even negative) net demand during
the day that then quickly ramps up in the evening.\footnote{The ramp-up period
forms the neck of the ``duck'' and the evening peak demand its head \citep{denholm2015overgeneration}.}
In contrast, our proposed algorithm shifts demand earlier into the day when solar resources are plentiful,
thus enabling it to reduce load in the evening hours. We give further details on this experiment in
Section \ref{sec: exp}.

\begin{figure}[t]
   \begin{center}
        \includegraphics[width = 0.8\textwidth]{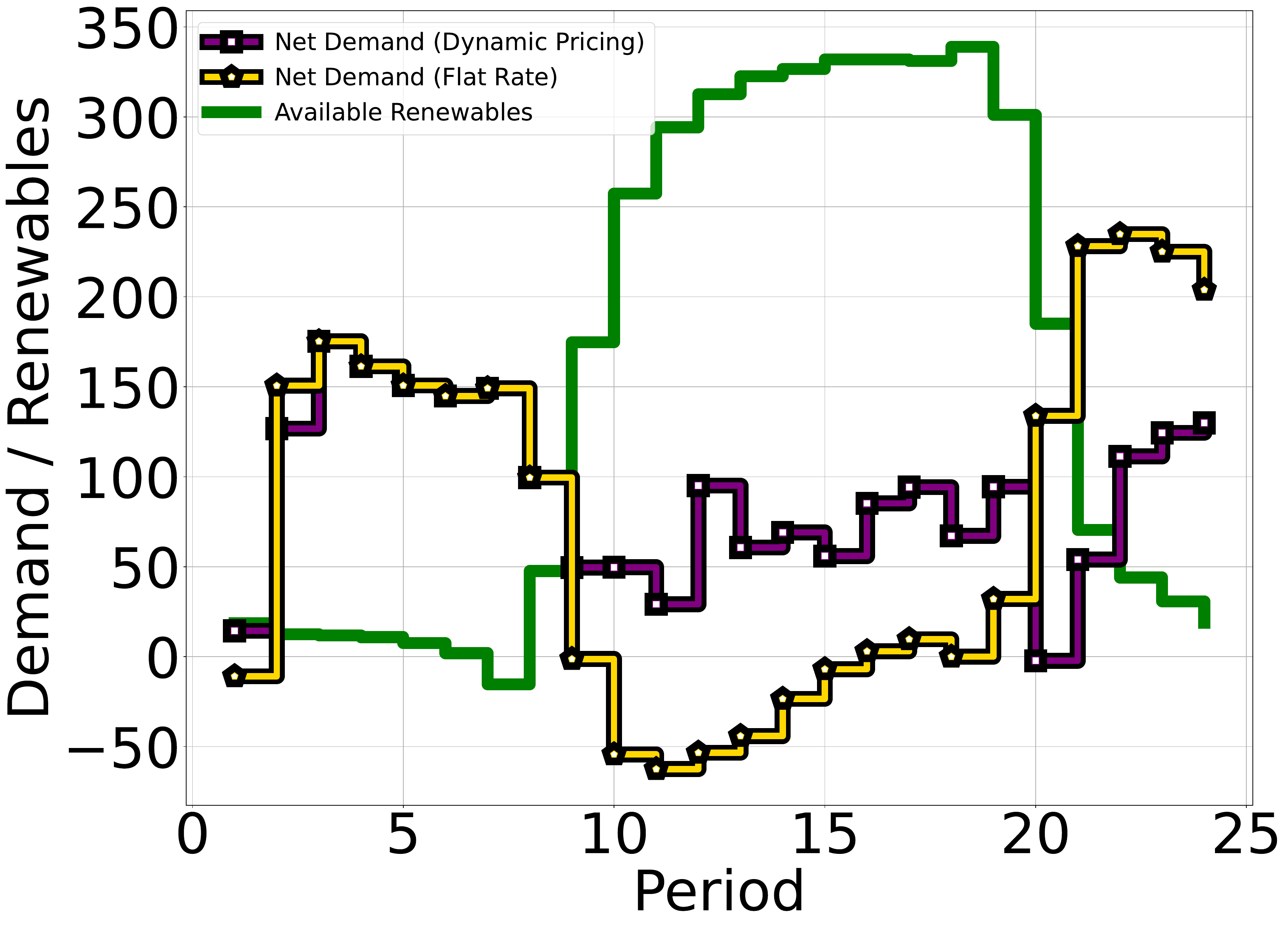}
        \end{center}
    \caption{Impact of optimized dynamic pricing on the net demand curve---demand minus available renewables---in the household cooling case study in Phoenix, Arizona (see Section \ref{sec: exp}). By transitioning from flat-rate pricing to the selected dynamic pricing, the classic ``duck-shaped'' net demand curve has been effectively flattened. The traditional duck curve, which represents net demand under flat-rate pricing, is problematic due to significant fluctuations from early morning (high demand/low renewables) to midday (low demand/high renewables) and again from midday to late evening(high demand/low renewables). The dynamic pricing applied in this study was derived solely from querying HEMS regarding their consumption profiles in response to various pricing signals.}\label{fig: duck}
\end{figure}

\subsection{Related work}

There is a rich literature on using time-varying prices for demand response
\citep{adelman2019dynamic,datchanamoorthy2011optimal,peura2015dynamic,schittekatte2024electricity,uckun2012dynamic,yang2012game,zhou2017incentive,zhou2019online}.
Formal studies of such pricing-based mechanisms are typically framed as Stackelberg games with a two-level leader-follower
structure \citep{adelman2019dynamic,yang2012game,zhou2017incentive,zhou2019online}: First the grid sets prices, and then consumers
respond to these prices. Our indifference-set approach is a special case of such a Stackelberg games, where lower-level
optimization involves consumers choosing their energy consumption to minimize costs while remaining within their indifference
sets (i.e., the consumer preferences are instantiated as hard constraints on allowed consumption patterns). We note that
Stackleberg games are also often used for the more abstract task of policy learning with strategic agents
\citep{chen2020learning,munro2024treatment,perdomo2020performative,sahoo2022policy};
and our paper can be seen as a part of this broader literature that exploits specific structure of
consumer energy price response.

The major difference between our paper and existing work is that we model consumer preferences
using constraints (as captured by indifference sets) rather than using smooth welfare
functions---and as a result of that this requires us to develop entirely different technical notions based on convex analysis on random sets
to study relevant system properties (as existing analytic strategies fundamentally rely on the smoothness
of welfare functions). As discussed above, we believe our indifference-set approach
to be substantially more tractable from a practical implementation point of view.

One recent and representative paper from the existing literature is
\citet{adelman2019dynamic}, who study dynamic pricing in a setting that includes current energy
demand, appliance status, operation times, global variables such as weather conditions, among
many others. They assume that each consumer's welfare depends on their energy usage in a differentiable
way, and that consumers choose their energy consumption to optimize welfare in response to set prices (e.g.,
using a HEMS). Given this setup, the authors define the social planner's welfare function as the sum
of all consumers' welfare minus the grid's generation cost, and seek prices that optimize the social
planner's welfare under the posited consumer behavior model (the consumers choose how much to consume
in response to prices, and the grid must then meet this demand). Their main result is that, as the
system size grows large, the optimal price mechanism in their model is one that perfectly externalizes
the grid's incremental production costs to the consumers, i.e., in equilibrium, energy is priced to
match the marginal cost of production. The authors propose finding prices that maximize the social
planner's welfare via a tempered fixed-point iteration algorithm.

Like \citet{adelman2019dynamic}, we consider a model where---at least in a qualitative sense---prices are
set in the view of mutual benefit of consumers and producers. However, unlike \citet{adelman2019dynamic}, we
do not rely on properties of the consumer welfare function, and do not posit a social planner who is able
to trade-off consumer and producer welfare on an additive scale.
Then, several mathematical tools used by \citet{adelman2019dynamic}---such as the implicit function theorem
to capture consumer response to prices---are no longer applicable in our model where consumer preferences
are modeled using indifference set, and so we cannot use analytic strategies from their paper to study our setting.

In another recent line of work, \citet{zhou2017incentive} consider demand-response programs that employ price incentives, where each consumer aims to minimize their own cost function. This cost function is composed of two parts: One that reflects power consumption and remains independent of price incentives (e.g., minimizing deviation from a nominal consumption level), and another that accounts for the expenses incurred due to price incentives. Building on this framework, \citet{zhou2019online} extends the analysis to incorporate discrete decision variables and dynamically coupled devices over time, such as HVAC systems. Their approach shares similarities with ours, particularly in modeling consumer objectives as minimizing cost functions and solving a social-welfare optimization problem (the direct method in our setup) to determine optimal pricing strategies. 
However, the authors fundamentally rely on an assumption that consumers set their power consumption by minimizing strongly convex cost functions whereas the cost functions implied by
our indifference-set approach are not strongly convex---thus again making their formal results non-applicable to our setting. We also note that the lack of strong convexity in our
model results in different qualitative system behavior: In the setting of \citet{zhou2017incentive} the authors find pricing to always achieve optimal demand response, whereas
in our model pricing is not optimal in finite systems (and the optimality gap only closes in a large-sample limit).



Finally, there have also been a number of recent papers studying incentive- rather than price-based mechanisms for demand response.
\citet{agrawal2022design}, \citet{daniels2014demand}, and \citet{webb2019coordinating} explore direct load control contracts, where utility companies manage customer loads, typically to reduce consumption as a form of demand response (in contrast, in the pricing algorithm we evaluate in our experiments, the grid has no explicit control over users' consumption; instead, HEMS selects optimal consumption profiles that meet users' needs while minimizing costs).
\citet{fattahi2023peak} formulates a stochastic dynamic program to design optimal contracts and develop approximation methods to achieve near-optimal solutions for energy consumption (they avoid explicitly referring to these contracts as demand response due to their complexity and the extended nature of the control periods). 
\citet{allcott2011social} studies the effect of ``home energy reports'' that compare a consumer's energy use to that of their neighbors, and finds that such programs can meaningfully shift behavior.

Our analytic results rely on random set theory to characterize consumer flexibility in demand response programs. Random sets have a rich history in applied probability, going back to \citet{aumann1965integrals}. In particular, the strong laws of large numbers for random of \citet{artstein1975strong} plays a key role in our analysis.
One key early application of these methods was to show existence of competitive equilibria in markets
with a continuum of agents who may have non-concave utility functions \citep{aumann1966existence,hildenbrand1974core}.
More recently, random-set methods have been used in econometrics to study inference in partially
identified models \citep{beresteanu2008asymptotic,beresteanu2012partial,kaido2014asymptotically}.
Textbook treatments of random set theory are given in \citet{molchanov2005theory} and \citet{artstein2015asymptotic}.

Our results on learning-to-price only assume that the grid can observe energy consumption levels under set prices;
and in particular do not require HEMS to share any (potentially sensitive) private information (e.g., information on
consumer preferences that underlie its consumption choices). As such, our learning problem can be seen as a bandit
problem with a continuous, high-dimensional action space \citep{ferreira2018online}.
In general, bandit problems with continuous action spaces do not allow for last learning \citep{shamir2013complexity}.
However, in our setting, we show that the grid can use mean-field properties of the system to effectively
infer relevant gradients from raw consumption data, thus enabling faster convergence properties \citep{wager2021experimenting}.

The remainder of the paper is structured as follows. Section 2 introduces the optimal demand response problem and formalizes the indifference set model. Section 3 establishes the mean-field optimality of pricing mechanisms and characterizes their asymptotic performance. Section 4 presents a first-order algorithm to compute optimal prices using only market-level data. Section 5 demonstrates the effectiveness of our approach through a case study on pre-cooling in Phoenix, Arizona. Finally, Section 6 discusses broader implications and concludes.

\section{Optimal Demand Response}

To design effective demand response mechanisms, we first establish a framework that captures consumer flexibility. We introduce the concept of indifference sets, which define the range of consumption profiles consumers find acceptable, and formulate the optimization problem faced by both the grid and consumers.

We consider a model with $j = 1, \, \ldots, \, d$ time periods, e.g., $d=24$ for hourly
planning over the course of a day, and with $i = 1, \, \ldots, \, n$ consumers. Each
consumer has certain needs that must be met---for example, maintaining indoor temperatures within acceptable bounds or ensuring that an electric vehicle (EV) is charged overnight---and they will consume electricity $q_i \in \RR^d$ to meet these needs. The grid operator
has access to a variety of energy resources, such as gas, wind and/or solar plants, that can be
used to produce this energy. We assume that the renewable energy resources, such as wind
and solar, exogenously produce energy $Q_0 \in \RR^d$ at zero marginal cost; the grid operator then
needs to deploy non-renewable resources to meet the remaining demand $\sum_{i = 1}^n q_i - Q_0$.

The goal of a demand-response program is to shift consumer demand so that the grid can
produce the needed energy more efficiently, and in particular so that it can avoid spikes in
net demand for non-renewable energy. We formalize the underlying problem as follows:

\begin{defi}
The {\bf indifference set} $\bR_i \subset \RR^d$ of each consumer is the set of
all energy consumption profiles that allow the consumer to meet their pre-specified needs.
\end{defi}

\begin{defi}
The {\bf optimal demand response} problem involves implementing acceptable energy
consumption profiles for each consumer such as to optimize a grid objective. If
the grid has a cost function $\sigma(Q, \, Q_0)$ for producing energy $Q \in \RR^d$
given renewable production $Q_0$, then the optimal demand response problem requires
finding a solution { to the Direct Method (DM) risk value}
\begin{equation}
\label{eq:opt_dr}
\hopt_n^{\myom} = \min\left\{\sigma\left(\sum_{i = 1}^n q_i, \, Q_0\right) : q_i \in R_i \text{ for all } i = 1, \, \ldots, \, n \right\}.
\end{equation}
\end{defi}

We give some idealized examples of possible indifference sets below. Throughout, we assume
that we are planning energy consumption over a 24 hour period starting at midnight so,
e.g., $q_{i1}$ is the consumption in kWh of the $i$-th user between midnight and 1am, etc.
The examples below are intentionally simple; and we emphasize that our formal results
allow full flexibility in specifying these indifference sets. We will not require any specific
parametrizations or functional forms for these sets; rather, we will simply assume that
each consumer has a HEMS that understands their own acceptable energy consumption profiles.

\begin{example}
Consider a consumer who plugs in an electric vehicle at midnight and only needs to add 50 kWh of charge by 7 am. Their indifference set is then $\bR_1 = \{q \geq 0 : \sum_{j = 1}^7 q_j = 50, \, q_j = 0 \text{ for all } j > 7\}$.
\end{example}

\begin{example}
Consider a consumer who only needs to run their dryer between 8 and 11 AM, which requires 3 kWh
over a 1 hour period. Their indifference set is then
$\bR_2 = \{q \geq 0: \sum_{j = 8}^{11} q_j = 3, \, \text{where } q_j = 0 \text{ or } 3 \text{ for all } 8 \leq j \leq 11, \, \text{and } q_j = 0 \text{ else}\}$.
\end{example}

\begin{example}\label{ex: heat-dynamics}
Consider a consumer who only wants to operate an air conditioner to maintain the indoor temperature of
their house within an acceptable range. Following \citet{li2011optimal}, we model indoor temperature
dynamics using the following linear formulation motivated by the heat equation,
\begin{align}\label{eq: linear-heat}
T^{\mathsf{in}}_t = T^{\mathsf{in}}_{t-1} + \alpha (T^{\mathsf{out}}_t - T^{\mathsf{in}}_{t-1}) + \beta q_t,
\end{align}
where $ \alpha\in \RR$ captures the insulation properties of the house, and $\beta < 0$
represents the cooling efficiency power of the system. Writing $[T^{\mathsf{min}}, \, T^{\mathsf{max}}]$
for the acceptable temperature range, and $T^{\mathsf{in}}_0$ for the initial indoor temperature,
the consumer's indifference set then becomes:
\begin{equation*}
\bR_3 = \cb{q \geq 0 :  T^{\mathsf{min}} \leq \sum_{i = 1}^j \p{1 - \alpha}^{j - i} \p{\beta q_i + \alpha T_i^{\mathsf{out}}} + (1 - \alpha)^j \, T^{\mathsf{in}}_0 \leq T^{\mathsf{max}} \text{ for all } 1 \leq j \leq 24}.
\end{equation*}
\end{example}

\begin{example}
Consider a consumer who wants to operate all 3 devices described above. Then
their indifference set is  $\bR_4 = \bR_1 \oplus \bR_2 \oplus \bR_3$.\footnote{The Minkowski sum (vector sum) of two indifference sets defined as $\bR_1\oplus \bR_2=\{a+b:a\in \bR_1,b\in \bR_2\}$.}
\end{example}

Our main goal of this paper is to characterize solutions to the optimal demand
response problem, and to design algorithms for achieving optimal consumption.
For flexibility, we will not seek to model individual consumer devices directly;
rather, we will assume throughout that we interact with consumers via a HEMS
device. This assumption is in line with current practice \citep{beaudin2015home,zhou2016smart}.

\begin{defi}
\label{defi:HEMS}
A {\bf home energy-management system (HEMS)} serving consumer $i$
is a device that can
\begin{enumerate}
\item Compute the indifference set $\bR_i$ in terms of the consumer's
high-level preferences and device properties;
\item Communicate with the grid operator; and
\item Choose and implement a consumption profile $q_i \in \bR_i$.
\end{enumerate}
\end{defi}

At this stage, our framework remains abstract: For example, our definitions
so far, it would, in principle, allow for the HEMS to give the grid operator direct
control of all consumer devices, and let the grid solve the optimization problem
\eqref{eq:opt_dr}. This approach would, of course, achieve optimal demand response,
but would likely not be implementable in real-world systems.
A more practical alternative is to decouple decision making by the grid and by the
HEMS via a price mechanism: The HEMS optimizes consumption to minimize energy costs
under (potentially time-varying) prices, and the grid chooses prices such as to induce
desirable consumption profiles.

\begin{defi}
\label{defi:HEMS_price}
A {\bf price-responsive HEMS} is a HEMS as in Definition \ref{defi:HEMS} that receives
a price-vector $p \in \RR^d$ from the grid, and then chooses
\begin{equation}\label{eq:  smart-device}
q_i := q_i(p) \in \argmin_q\cb{p \cdot q : q \in \bR_i}.
\end{equation}
\end{defi}

In a system where consumers operate price-responsive HEMS, the grid's optimal pricing
problem becomes a bi-level optimization problem \citep{zhou2017incentive}:
\begin{equation}
\label{eq:opt_price}
p^* \in \argmin_p\cb{\sigma\left(\sum_{i = 1}^n q_i(p), \, Q_0\right)},
\end{equation}
resulting in an objective value $\hopt_n^{\mydp}$  as the optimal Price-based Method (PM) risk value. 
We are now ready to state our main research questions:
First, can a pricing mechanism as in \eqref{eq:opt_price} solve the optimal demand
response problem \eqref{eq:opt_dr}, or will systems with pricing-based HEMS fall
short of what could be achieved via direct device control by the grid?
Second, if an optimal pricing mechanism exists, are there computationally efficient and practically
applicable algorithms to learn the optimal prices?
Overall, we will find that while price-based mechanisms may be suboptimal in finite systems,
optimality gap of pricing as in \eqref{eq:opt_price} vanishes as the system size grows.
Furthermore, optimal prices can be recovered via a simple first-order algorithm.

\begin{rmk}
In our model, consumer indifference sets remain exogenous to prices, i.e., consumers
respond to prices by optimizing their consumption with $\bR_i$ as in \eqref{eq:  smart-device},
but the $\bR_i$ themselves remain fixed. This modeling choice reflects our underlying research
question: How can we design mechanisms that leverage {\it existing consumer flexibility} for demand
response (and how does a price-based mechanism compare with direct control)? Once the $\bR_i$
are fixed and coded into HEMS, optimal demand response essentially becomes an optimization problem,
and our challenge is to find (or approximate) the optimum in a feasible and pragmatic way (i.e.,
in a way that respects natural communication and privacy desiderata). Reasoning about
how the $\bR_i$ themselves may change in the long run in response to new pricing strategies
is an interesting question (see Section \ref{sec:discussion} for further discussion); however,
doing so would likely not be automatable using HEMS (and would instead require a deeper understanding
of how consumers update preferences over time), and so is outside the scope of the present analysis.
\end{rmk}

\begin{rmk}
As a corollary to the fact that the indifference sets $\bR_i$ are exogenous with respect to prices
we observe that, in our model, price-responsive HEMS are only responsive to variation in the
shape of the price curve and not in the overall level of the price, i.e., $q_i(p) = q_i(\lambda p)$
for any $\lambda > 0$. Similarly, optimal prices in \eqref{eq:opt_price} are scale invariant:
If $p^*$ is optimal, then $\lambda \, p^*$ is also optimal for any $\lambda > 0$.
Thus, in our model, demand response is purely about cross-time substitution of energy
use, and successful price-based demand response involves understanding how the
the relative magnitudes of the price signal over time will shape demand.
\end{rmk}

\begin{rmk}
Because behavior is scale-invariant to prices, the overall price level remains a free
parameter in our model---and can be set by the regulator and/or the utilities. In other
words, once our approach outputs a price curve $p^*$, the regulator can rescale it to
$\lambda \, p^*$ for any $\lambda > 0$ in order to control some aggregate metrics of energy
costs or utility revenue; see the discussion following Theorem \ref{thm:stable}
for more details.
\end{rmk}

\subsection{A Warm-up Illustration}
Before attempting a general answer to these questions, we begin by examining
both direct and price-based approaches to demand response in a simplified scenario
where the problems \eqref{eq:opt_dr} and \eqref{eq:opt_price} can be solved
explicitly. We consider a two-period setting with no renewable production
(i.e., with $Q_0 = 0$), and where the grid wants to minimize peak demand (and thus
to use demand-response programs to shift consumer demand away from peak periods);
the grid cost functions is $\sigma(Q) = \sup\cb{Q_j : j = 1, \, \ldots, \, d}$.
Finally, we assume that each consumer has a linear indifference set
\begin{equation}
\label{eq:example_lin}
\bR_i = \cb{q \in \RR_+^2 : q_1 / a_i + q_2 / b_i = 1},
\end{equation}
where $a_i, \ b_i > 0$ are consumer-specific parameters.

\begin{propo}
\label{propo:warmup}
Consider the above setting with $i = 1, \, \ldots, \, n$ users with indifference
sets as in \eqref{eq:example_lin}, and where the grid seeks to minimize peak-load.
Then, optimal demand response as in \eqref{eq:opt_dr} achieves an objective value
\begin{equation}
\hopt_n^{\myom} = \max_{0 \leq z \leq 1}\cb{L_n(z)}, \ \
L_n(z) =  \sum\limits_{i=1}^{n}\p{ (1-z) b_i \, \ind\p{(1-z) b_i< z a_i }+ z a_i \, \ind\p{(1-z) b_i \ge z a_i }}.
\end{equation}
Meanwhile, optimal pricing as in \eqref{eq:opt_price} achieves
\begin{equation}
\hopt_n^{\mydp} = \min_{0 \leq z \leq 1}\cb{U_n(z)}, 
\ \ U_n(z)=\min\cb{U^+_n(z),U^-(z)}\,,
\end{equation}
where $U^+_n(z),U^-(z)$ are given by
\begin{align*}
    U^+_n(z) &=  \max\cb{\sum\limits_{i=1}^{n} a_i \, \ind\p{a_i z \leq b_i (1 - z) }, \, \sum\limits_{i=1}^{n} b_i \, \ind\p{a_i z > b_i (1 - z) }}\,,\\
    U^-_n(z) &=  \max\cb{\sum\limits_{i=1}^{n} a_i \, \ind\p{-a_i z \leq b_i (-1 + z) }, \, \sum\limits_{i=1}^{n} b_i \, \ind\p{-a_i z > b_i (-1 + z) }}\,.
\end{align*}
\end{propo}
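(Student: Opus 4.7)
My strategy is to treat the two claims separately: I would obtain the formula for $\hopt_n^{\myom}$ by strong linear-programming duality, and the formula for $\hopt_n^{\mydp}$ by an explicit case analysis on the sign pattern of the price vector.

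For the direct-control claim, I would parametrize each choice on the line segment $\bR_i$ by $q_i = (a_i s_i,\, b_i(1-s_i))$ with $s_i \in [0,1]$ and introduce an epigraph variable $z$, recasting \eqref{eq:opt_dr} as the linear program $\min z$ subject to $\sum_i a_i s_i \leq z$, $\sum_i b_i(1-s_i) \leq z$, and $s_i \in [0,1]$. Writing the Lagrangian with multipliers $\mu_1,\mu_2 \geq 0$ on the two peak constraints, the coefficient of $z$ is $1 - \mu_1 - \mu_2$, so boundedness of the inner problem forces $\mu_1 + \mu_2 = 1$; each $s_i$-minimization is attained at a corner and contributes $\min(\mu_1 a_i,\, \mu_2 b_i)$. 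Relabeling $\mu_1 = z$ and $\mu_2 = 1-z$ yields dual objective $\sum_i \min(z a_i,\, (1-z) b_i)$, and splitting this $\min$ into its two branches with the stated indicators recovers $L_n(z)$ exactly. Strong LP duality (feasibility and boundedness are immediate) then gives $\hopt_n^{\myom} = \max_{z \in [0,1]} L_n(z)$.

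For the pricing claim, the key observation is that on the segment $\bR_i$ the HEMS subproblem \eqref{eq:  smart-device} is linear and its optimum is attained at one of the two endpoints $(a_i, 0)$ and $(0, b_i)$: consumer $i$ selects slot 1 when $p_1 a_i < p_2 b_i$ and slot 2 when $p_1 a_i > p_2 b_i$. Because the grid objective $\max(Q_1, Q_2)$ depends on $p$ only through the induced partition of consumers into slots, and because this partition is invariant under positive rescaling of $p$, I would reduce to price vectors normalized to $|p_1| + |p_2| = 1$ and split into four sign regimes. In the all-positive regime $p = (z, 1-z)$, the slot-1 condition reads $a_i z \leq b_i(1-z)$ and the resulting peak is exactly $U_n^+(z)$. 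In the all-negative regime $p = (-z, -(1-z))$, minimizing $p \cdot q$ flips the inequality (the consumer now prefers the endpoint of larger magnitude), giving slot-1 condition $a_i z \geq b_i(1-z)$ and peak $U_n^-(z)$. The two mixed-sign regimes force every consumer to the same slot, producing peaks $\sum_i a_i$ or $\sum_i b_i$ that already coincide with the endpoint values of $U_n^\pm$ at $z \in \{0, 1\}$ and are therefore dominated. Combining the two relevant regimes gives $\hopt_n^{\mydp} = \min_{z \in [0,1]} \min\{U_n^+(z),\, U_n^-(z)\}$.

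The main obstacle I anticipate is the treatment of ties $a_i z = b_i(1-z)$, where the HEMS is indifferent and in principle could even mix. I would resolve this by noting that any desired tie-break is realizable by an arbitrarily small perturbation of $p$: if a particular tie-break lowers the peak, a nearby untied price implements the same assignment with nearly the same peak, so the infimum over prices is attained with the favorable tie-break that is already encoded in the $\leq$/$>$ split of $U_n^+$ and the $\geq$/$<$ split of $U_n^-$. The remaining verifications—attainment of the LP dual maximum, and the claim that positive and negative prices together exhaust the achievable consumer partitions—are then routine.
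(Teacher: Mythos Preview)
Your proposal is correct and follows essentially the same route as the paper: a duality/minimax argument for the direct-control formula (the paper writes $\|\cdot\|_\infty$ via its $\ell_1$-dual and swaps min and max, which is exactly your LP dual with $\mu_1+\mu_2=1$), and a sign-based case analysis on the price vector for the pricing formula. The only notable difference is in handling ties: the paper simply posits as a modeling choice that an indifferent HEMS always selects period one, which directly yields the $\leq/>$ and $\geq/<$ splits in $U_n^+$ and $U_n^-$, whereas you argue via perturbation---both are valid, but the paper's convention is more direct given that the formula in the statement already encodes a specific tie-break.
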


Because direct optimization is guaranteed to achieve optimal demand response, we know that
we must have
$$ \max_{0 \leq z \leq 1}\cb{L_n(z)} \leq \min_{0 \leq z \leq 1}\cb{U_n(z)}, $$
and furthermore, the size of the vertical gap between these functions $L_n(z)$
and $U_n(z)$ can be used to measure the sub-optimality of the pricing mechanism.
To gain further insight into this gap, we simulate consumers and draw the indifference
set parameters in \eqref{eq:example_lin} as
\begin{equation}
a_i, \, b_i \overset{\text{iid}}{\sim}\unif[0, \, 2].
\end{equation}
In Figure \ref{fig:ell-infty-gap}, we plot the resulting $n^{-1}L_n(z)$ and $n^{-1}U_n(z)$
curves for both a realization of a small system with $n = 10$ consumers and a
realization of a larger system with $n = 100$ consumers.

\begin{figure}[t]
   \begin{center}
        \includegraphics[width = 0.5\textwidth]{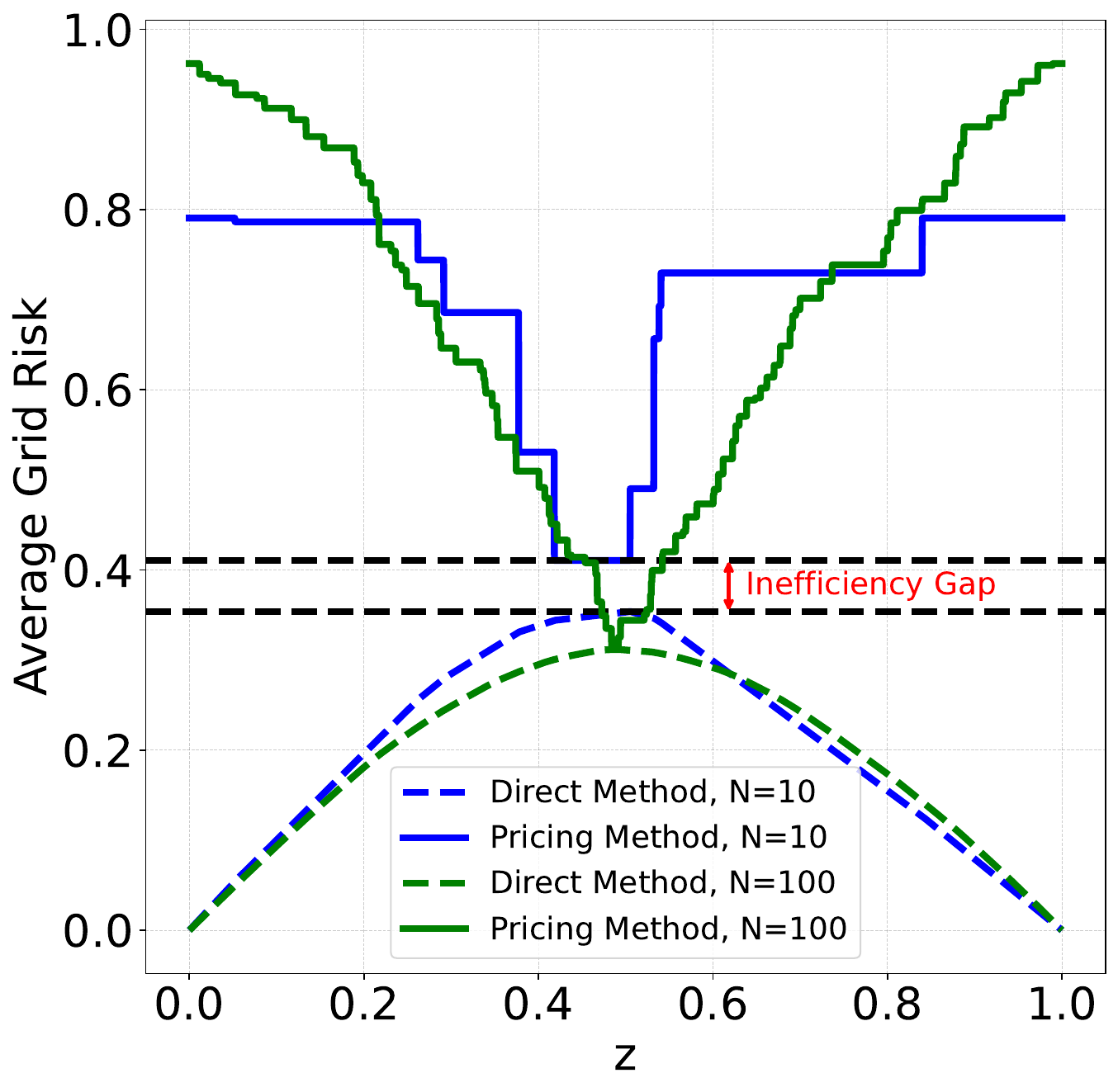}
        \end{center}
    \caption{Peak demand achieved via the direct control and pricing mechanisms
    in the setting of Proposition \ref{propo:warmup}, in both a small system with
    $n = 10$ consumers and a larger system with $n =100$ consumers. For direct control,
    we plot $n^{-1} L_n(z)$ (and the scaled optimal objective value is the maximum of this function),
    while for pricing we plot $n^{-1} U_n(z)$ (and the scaled optimal objective value is the minimum
    of this function).}
    \label{fig:ell-infty-gap}
\end{figure}

We immediately see that, in the small system, pricing is suboptimal:
Direct control achieves scaled peak demand of 0.35, while optimal
pricing can only get to a scaled peak demand of 0.41; in other
words, the grid cost with optimal pricing is 17\% higher with pricing
than with direct control. However, this the optimality gap of pricing
essentially vanishes as we scale the system up to $n = 100$ customers;
visually we already see no gap in Figure \ref{fig:ell-infty-gap} at
this scale. Below, we will prove that this finding was no accident---and
that, while potentially suboptimal in small systems, pricing mechanisms
are quasi-optimal in large systems under considerable generality.

\section{Mean-field Optimality of Dynamic Pricing}
\label{sec:theory}

With the indifference set framework in place, we examine whether price-based mechanisms can achieve optimal demand response. We show that while pricing is generally suboptimal in finite markets, it converges to optimality in a mean-field setting as the number of consumers grows.

To present our main result on large-sample optimality of pricing mechanisms for
demand response, we first need to introduce some notation.
Let $\mathfrak{C}_d$ denote the space of all non-empty compact subsets of $\RR^d$.
For two sets $C_1, \, C_2 \in \mathfrak{C}_d$, we write their Minkowski sum as
\[
C_1\oplus C_2=\{c_1+c_2: c_1\in C_1, c_2\in C_2\}. 
\]
For any non-empty subset $C$ of $\reals^d$, we define its support function
as
\begin{equation}
\delta(s; \, C) = \sup\cb{c \cdot s : c \in C}. 
\end{equation}
We measure set similarity using the Hausdorff distance
\[
\sH(C_1,C_2)=\inf\{\eps>0:  C_1\in C_2\oplus B_\eps(0)\,, C_2\in C_1\oplus B_\eps(0)\},
\]
where $B_r(x) \subset \RR^d$ denotes the $d$-dimensional Euclidean ball of radius
$r$ centered at $x$.
Recall we have assumed that each user has an indifference set $\bR \subset \RR^d$;
for our analysis, we will take these indifference sets to be random, and we construct
random sets $\bR \in \mathfrak{C}_d$ as follows. Given a probability space
$(\Omega, \, \mathcal{F}, \, P)$, we say that a multifunction $R : \Omega \To \mathfrak{C}_d$
is measurable if, for all open sets $O \subset \RR^d$ 
$$ R^{-1}(O) = \cb{\omega \in \Omega : R(\omega) \cap O \neq \emptyset} $$
is measurable \citep[Chapter 14]{rockafellar2009variational}.
We say that a random set is measurable if $\bR = R(\omega)$ for a measurable
multifunction.\footnote{Some papers cited below use different definitions of measurability
for random sets; see \citet{bellay1986measurability} for results on the equivalence of these definitions.}
We further define a selection of a random set $\bR$ as any random variable $X \in \RR^d$ with \citep{vitale1990brunn}
\begin{equation}
\label{eq:selection}
\prob(X \in \bR) = 1, \ \ \ \ \EE{\lVert X\rVert} < \infty,
\end{equation}
i.e., a random variable that always chooses an element from the random set;
and define the expected value of a random set as
\begin{equation}
\EE{\bR} = \cb{q = \EE{X} : X \text{ is a selection of } \bR},
\end{equation}
while noting that if $\E[\|\bR\|] < +\infty$, then $\E[\bR]$ is non-empty and 
$\E[\bR] \in \fC_d$ \citep{aumann1965integrals}. This notion of a random
set results in the following strong law of large numbers:
If $\E[\|\bR\|] < \infty$,
then \citep{artstein1975strong}
\begin{equation}\label{eq: as-hasudorf}
\lim\limits_{n\to \infty}^{}\sH \p{\frac{1}{n}\bigoplus\limits_{i=1}^{n} \bR_i , \ \EE{\conv(\bR)}} =_{a.s.} 0,
\end{equation}
where $\conv$ denote the convex hull operation.\footnote{Note that since $\conv(\fC_d)$ is
a closed subset of $\fC_d$ and $\conv$ is a continuous mapping with respect to the
Hausdorff distance in a finite-dimensional space, $\conv(\bR)$ is measurable, and
its expectation is well-defined.}
We are now ready to state our main assumptions. Our first assumption is a standard
technical assumption, and implies useful concentration properties for random sets.

\begin{assumption}\label{assum: R-basic}
The user indifference sets are independent and identically from a
distribution over $\mathfrak{C}_d$; in more detail, we assume that
there is a non-atomic probability space $(\Omega, \, \mathcal{F}, \, P)$ and a
measurable multifunction $R : \Omega \To \mathfrak{C}_d$ such that
$\bR = R(\omega)$. Furthermore, we assume that $\E[\|\bR\|]<+\infty$.
\end{assumption}


Next, we introduce an assumption the grid risk functions. In real-world applications, practitioners often aim to regulate peak demand, total consumption, or sharp transitions between periods. We find that a class of order-one positively homogeneous grid risk functions is sufficiently powerful to capture these behaviors and effectively shape the net-demand curve.

\begin{assumption}\label{assum:sigma}
The grid costs can be represented in terms of a convex function $\rho:\reals^d\to \reals$ of net demand, i.e.,  $\sigma(Q, \, Q_0)=\rho(Q-Q_0)$. Furthermore, $\rho(.)$ is order-one positively homogeneous, i.e., for every $\alpha \ge 0$ and $q\in \reals^d$ we have that $\rho(\alpha q)=\alpha \rho(q)$.
\end{assumption}


We now state some technical results that highlight key consequences of these
assumptions. First, under our setting, set-expectation commutes with the user
cost function in the following sense:

\begin{lemma}
\label{lemm:H}
Under Assumption \ref{assum: R-basic}, write the user-$i$ cost function from solving \eqref{eq:opt_price} as
\begin{equation}
\label{eq:hp}
h(p; \, \bR_i)=\min\cb{q \cdot p : q\in \bR_i};
\end{equation}
and let $H(p) = h(p; \, \EE{\bR_i})$.
Then, for any $p \in \RR^d$ we also have $H(p) = \EE{h(p; \, \bR_i)}$.
\proof
The user cost function can also be written as a support function, i.e.,
$h(p; \, \bR_i) = -\delta(-p; \, \bR_i)$; thus, proving our desired result
is equivalent to showing that
$$ \EE{\delta(p; \, \bR_i)} =\delta(p; \,  \EE{\bR_i}) $$
for all $p$ in $\RR^d$. We now note 3 well-known facts in convex analysis:
{\it (1)} For any random set $\bf{C}$ that is almost surely convex, we have
\citep{artstein1974calculus}
\begin{equation*}
\E[\delta(u; \, \bC)] = \delta(u; \, \E[\bC]) \text{ for all } u \in \RR^d;
\end{equation*}
{\it (2)} Whenever $\bR_i$ has a non-atomic distribution,
we have $\EE{\conv(\bR_i)} = \EE{\bR_i}$ \citep{aumann1965integrals};
and {\it (3)} the support function of a set and its convex hull are equal, i.e.,
$\delta(.; \, \bR) = \delta(.; \, \conv(\bR))$. It then follows that
$$ \EE{\delta(p; \, \bR_i)} 
\overset{\text{(3)}}{=} \EE{\delta(p; \, \conv(\bR_i))} 
\overset{\text{(1)}}{=} \delta(p; \,  \EE{\conv(\bR_i)})
\overset{\text{(2)}}{=} \delta(p; \,  \EE{\bR_i}), $$
as claimed.
\qed
\end{lemma}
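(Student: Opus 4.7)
The plan is to reduce the statement to a classical fact about Aumann expectations of convex random sets. First, I would rewrite the user cost function as a support function. Since
\[
 h(p; \, \bR_i) = \min\cb{q \cdot p : q \in \bR_i} = -\sup\cb{q \cdot (-p) : q \in \bR_i} = -\delta(-p; \, \bR_i),
\]
proving the lemma is equivalent to establishing that $\EE{\delta(-p; \, \bR_i)} = \delta(-p; \, \EE{\bR_i})$ for every $p \in \RR^d$.

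Next, I would chain together three standard facts from the theory of random sets. First, the support function of any set equals the support function of its convex hull, so $\delta(\cdot; \, \bR_i) = \delta(\cdot; \, \conv(\bR_i))$. Second, for a measurable, integrably bounded, almost surely convex random set $\bC$, expectation commutes with the support function: $\EE{\delta(u; \, \bC)} = \delta(u; \, \EE{\bC})$ for all $u \in \RR^d$. Third, when the underlying probability space is non-atomic, Aumann's convexification theorem yields $\EE{\conv(\bR_i)} = \EE{\bR_i}$. Applying these in sequence gives
\[
 \EE{\delta(-p; \, \bR_i)} = \EE{\delta(-p; \, \conv(\bR_i))} = \delta(-p; \, \EE{\conv(\bR_i)}) = \delta(-p; \, \EE{\bR_i}),
\]
which is the desired identity.

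The main subtlety, and the reason Assumption \ref{assum: R-basic} insists on a non-atomic probability space, is that the indifference sets in our setting need not be convex (consider the dryer example, whose indifference set is a discrete four-point set). Direct commutation of expectation with the support function is only available for convex random sets, so one cannot skip the detour through $\conv(\bR_i)$. Non-atomicity is precisely the hypothesis that lets Aumann's theorem bridge the gap, since mass can be spread across the sample space so that the Aumann integral automatically convexifies. The only remaining technical bookkeeping is to verify that $\bR_i$ satisfies the measurability and integrability hypotheses required by each of the three facts invoked above, which is immediate from Assumption \ref{assum: R-basic}.
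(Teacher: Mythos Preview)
Your proposal is correct and follows essentially the same route as the paper: rewrite $h$ via the support function, then chain the convex-hull invariance of $\delta$, the commutation of expectation with $\delta$ for convex random sets, and Aumann's convexification under non-atomicity. Your added remark explaining why the detour through $\conv(\bR_i)$ is necessary (via the discrete dryer example) is a nice touch that the paper's proof leaves implicit.
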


Second, under Assumption \ref{assum:sigma}, there exists a compact convex set
$\mathcal{P} \subset \mathbb{R}^d $ such that $ \rho(.)$ is its support function
\citep[e.g.,][Claim A.2.24]{artstein2015asymptotic}:\footnote{See Lemma
\ref{lemma: rho-ball} in the Appendix for a self-contained proof of this claim.}
\begin{align}\label{eq: cP}
\rho(q) = \delta\p{q; \, \mathcal{P}}, \ \ \ \
\mathcal{P} = \left\{ z \in \mathbb{R}^d : z \cdot q \le \rho(q)\,,\quad \forall q \in \mathbb{R}^d \right\}.
\end{align}
Given these technical preliminaries, we are now ready to state the first characterization result.

\begin{thm}\label{thm: zero-gap-basic}
Under Assumptions \ref{assum: R-basic} and \ref{assum:sigma}, suppose
we have a sequence of systems with renewable production scaling with
system size, $\lim_{n \rightarrow \infty} Q_0 / n =_{a.s.} q_0$. Then,
\begin{enumerate}
\item\label{part-i-basic} The scaled cost of the direct control mechanism converges almost surely,
\begin{equation}\label{eq: mean-field-DM}
\lim\limits_{n\to \infty} \frac{1}{n} \hopt_n^{\myom} = \mathsf{R}^{\myom} := \max \cb{H(z)-z \cdot q_0 : z\in \cP},
\end{equation}
where $\cP$ is given in \eqref{eq: cP}, and $H(.)$ is as in Lemma \ref{lemm:H}.
\item \label{part: 2-basic} 
 For any price $p$, if the mean-field cost function $H(.)$ is differentiable at $p$, then 
 the scaled grid cost from using prices $p$ converges almost surely, 
\begin{equation}\label{eq: mean-field-PM}
\lim\limits_{n\to \infty} \frac{1}{n} \sigma\p{\sum_{i = 1}^n q_i(p), \, Q_0} = \rho\p{\nabla H(p) - q_0}.
\end{equation}
\end{enumerate}
\proof
We start with part 1. Let
\[
\widehat{\bR}_n=\frac{1}{n} \bigoplus\limits_{i=1}^{n}\bR_i
\]
denote the Minkowski average of the user indifference sets. By Assumption \ref{assum:sigma},
\begin{equation}
\label{eq: DM-alternative}
\begin{split}
\frac{1}{n} \hopt_n^{\myom}
&= \min \cb{\frac{1}{n} \sigma\left(\sum\limits_{i=1}^{n}q_i, \, Q_0\right) : q_i\in \bR_i \text{ for all } 1 \leq i \leq n} \\
&= \min\cb{ \rho\p{\frac{1}{n}\sum\limits_{i=1}^{n}q_i - Q_0/n} : q_i\in \bR_i \text{ for all } 1 \leq i \leq n}\\
&= \min\cb{ \rho\p{q - Q_0/n} : q \in \widehat{\bR}_n}.
\end{split}
\end{equation}
Thus, by the strong law of large numbers \eqref{eq: as-hasudorf}, the almost sure
convergence of $Q_0/n$ and Lipschitz-continuity of $\rho$, we have
\begin{equation}
\lim_{n \rightarrow \infty} \frac{1}{n} \hopt_n^{\myom} =_{a.s.} \min\cb{ \rho\p{q - q_0} : q \in \EE{\bR}}. 
\end{equation}
Now, by the characterization of $\rho(\cdot)$ given in \eqref{eq: cP}, we have
$$ \min_{ q \in \EE{\bR}}\cb{ \rho\p{q - q_0}} 
= \min_{q \in \EE{\bR}}\cb{ \max_{z \in \cP}\cb{ z \cdot \p{q - q_0}}}. $$
Furthermore, because we are working in a non-atomic probability space and $\EE{\lVert \bR\rVert} < \infty$,
the set $\EE{\bR}$ must be convex and compact \citep{aumann1965integrals},
and so by the minimax theorem
\begin{align*}
\min_{q \in \EE{\bR}}\cb{ \max_{z \in \cP}\cb{ z \cdot \p{q - q_0}}}
= \max_{z \in \cP}\cb{ \min_{q \in \EE{\bR}}\cb{ z \cdot \p{q - q_0}}} 
= \max_{z \in \cP}\cb{ H(z) - z \cdot q_0},
\end{align*}
which establishes \eqref{eq: mean-field-DM}.

Moving to part 2, we start by noting that---in our model---the user energy
consumption $q_i(p)$ is a subgradient of the user cost $h(p; \, \bR_i)$, i.e.,
$q_i(p) \in \partial h(p; \, \bR_i)$. To verify this claim, we
note that $h(p;\bR)$ is concave because it is a point minimization of concave functions $p \cdot q$,
and $h(p; \, \bR_i)=p \cdot q_i(p)$. Now, for any $p'$,
 \begin{align*}
q_i(p) \cdot p'&= q_i(p) \cdot p+(p'-p) \cdot q_i(p) =  h(p; \, \bR_i)+ (p'-p) \cdot q_i(p).
 \end{align*}
Writing $q_i(p')$ for optimal consumption under $p'$, we must have
$p' \cdot q_i(p')\le p' \cdot q_i(p)$, and so
\begin{equation}\label{eq: q_p_subgradient}
h(p'; \, \bR_i)=p' \cdot q_i(p') \le p' \cdot q_i(p) = h(p; \, \bR_i)+(p'-p) \cdot q_i(p).
\end{equation}
Finally, $q_i(p) \in \partial h(p; \, \bR_i)$ because the above holds for any $p' \in \RR^d$.

Next, because $h(p; \, \bR_i)$ is concave, its subgradient
$\partial h(p; \, \bR)$ is a measurable random set \citep[Remark 30]{shapiro2021lectures}.
Furthermore, because $\EE{h(p; \, \bR_i)} = H(p)$ by Lemma \ref{lemm:H},
our assumption that $H(\cdot)$ is differentiable at $p$ implies that $h(\cdot; \, R_i)$ is
almost surely differentiable at $p$ and $\EE{\nabla h(p; \, \bR_i)} = \nabla H(p)$
\citep[Theorem 7.46]{shapiro2021lectures}. Finally, because $q_i(p) \in \partial h(p; \, \bR_i)$
as shown above, almost sure differentiability of $h(p; \, \bR_i)$ implies that
$q_i(p) = \nabla h(p; \, R_i)$ almost surely, that $\EE{q_i(p)} = \nabla H(p)$.
Thus, by the strong law of large numbers,
\begin{equation}
\lim_{n \rightarrow \infty} \frac{1}{n} \sum_{i = 1}^n q_i(p) =_{a.s.} \nabla H(p),
\end{equation}
and \eqref{eq: mean-field-PM} follows from Assumption \ref{assum:sigma},
by invoking homogeneity and Lipschitz-continuity of $\rho(\cdot)$ (see Lemma \ref{lemma: rho-ball}
in the appendix).
\qed
\end{thm}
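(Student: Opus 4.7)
The plan for Part 1 is to reduce the direct-control problem to a deterministic optimization over the mean-field indifference set $\E[\bR]$ and then apply a minimax argument. First I would exploit order-one positive homogeneity of $\rho$ to pull the factor $1/n$ inside, writing $n^{-1}\sigma(\sum_i q_i, Q_0) = \rho(\bar q - Q_0/n)$, where $\bar q = n^{-1}\sum_i q_i$ ranges over the Minkowski average $\widehat{\bR}_n = n^{-1} \bigoplus_i \bR_i$. The strong law of large numbers for random sets cited in \eqref{eq: as-hasudorf} then gives $\widehat{\bR}_n \to \E[\conv(\bR)]$ in Hausdorff distance, which, combined with non-atomicity of $\bR$ (so that $\E[\conv(\bR)] = \E[\bR]$) and Lipschitz continuity of $\rho$, yields $n^{-1}\hopt_n^{\myom} \to \min_{q \in \E[\bR]} \rho(q - q_0)$ almost surely. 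Substituting $\rho(\cdot) = \delta(\cdot; \cP)$ from \eqref{eq: cP} and exchanging min and max via the minimax theorem on the compact convex sets $\E[\bR]$ and $\cP$ then gives $\max_{z \in \cP}\{H(z) - z \cdot q_0\}$, using Lemma \ref{lemm:H} to identify $\min_{q \in \E[\bR]} z \cdot q$ with $H(z)$.

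For Part 2, the plan is to show that the HEMS consumption $q_i(p)$ coincides with a subgradient of $h(\cdot; \bR_i)$ at $p$, and then transfer differentiability of the mean-field function $H$ back to the individual random functions. First I would note that $h(p; \bR_i) = \min_q \{p \cdot q : q \in \bR_i\}$ is concave in $p$ as a pointwise infimum of linear functions, and a direct computation using the optimality of $q_i(p)$ under the price $p$ shows that $h(p'; \bR_i) \le p' \cdot q_i(p) = h(p; \bR_i) + (p' - p) \cdot q_i(p)$ for every $p'$, so $q_i(p) \in \partial h(p; \bR_i)$. Given $H(p) = \E[h(p; \bR_i)]$ from Lemma \ref{lemm:H} and the hypothesis that $H$ is differentiable at $p$, I would invoke a measurable-selection / interchange result for subdifferentials of random concave functions to conclude that $h(\cdot; \bR_i)$ is almost surely differentiable at $p$, that $\nabla h(p; \bR_i) = q_i(p)$ a.s., and that $\E[q_i(p)] = \nabla H(p)$. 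The classical SLLN then delivers $n^{-1}\sum_i q_i(p) \to \nabla H(p)$ almost surely, and combining with $Q_0/n \to q_0$ along with homogeneity and Lipschitz continuity of $\rho$ closes out \eqref{eq: mean-field-PM}.

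The hard part will be the measurability-and-differentiability bridge in Part 2: promoting pointwise differentiability of the deterministic averaged function $H$ at a single $p$ to almost-sure differentiability of the random concave function $h(\cdot; \bR_i)$ at that same $p$, together with the identification $\E[q_i(p)] = \nabla H(p)$, is delicate and cannot be handled by a naive Fubini argument. One must appeal to the theory of integrable selections of a measurable subdifferential multifunction, which is where I would need to lean on a black-box result from the literature on stochastic convex analysis. Part 1, by contrast, should go through once one has the Minkowski-average SLLN and the minimax theorem in hand, with homogeneity and Lipschitz continuity of $\rho$ (both consequences of Assumption \ref{assum:sigma} via \eqref{eq: cP}) absorbing the remaining analytic details.
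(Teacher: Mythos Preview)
Your proposal is correct and follows essentially the same route as the paper's own proof: the homogeneity-plus-Minkowski-average reduction, the set-valued SLLN, and the minimax argument for Part 1 are identical, and for Part 2 you reproduce the same subgradient verification and correctly identify the key bridge step. The black-box you anticipate needing is exactly what the paper invokes, namely Theorem 7.46 of \citet{shapiro2021lectures}, which supplies the ``differentiability of the expectation implies almost-sure differentiability plus $\E[\nabla h]=\nabla H$'' implication for random concave functions.
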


The above result gives the scaled costs for both the direct and price-based
mechanisms; it now remains to show that \eqref{eq: mean-field-DM} and the minimizer
of \eqref{eq: mean-field-PM} over $p$ match.
The function $\rho(\nabla H(p) - q_0)$ is not necessarily convex in $p$ because it depends on
$p$ through the differential of $H$; thus, classic duality theory for convex
functions does not apply here. The following result nonetheless establishes
optimality of the price-based mechanism under one further assumption:
We require sufficient stochasticity in the $\bR_i$ for the mean-field cost function $\E[h(p;\bR)]$
to be smooth and differentiable at its maximizer(s). We note that this phenomenon (i.e., differentiability at
the maximizer) is generally not true for the individual cost functions themselves.\footnote{We
emphasize that this randomness pertains to the distribution of consumption profiles $\bR_i$ rather
than consumer preferences. For example, in our Phoenix experiment discussed in Section \ref{sec: exp},
all consumers have similar preferences---yet heterogeneity in house characteristics (e.g., differences
in home insulation properties, cooling system performance, etc.) provide sufficient randomness
to justify Assumption \ref{assum: diff-basic}.}

\begin{assumption} \label{assum: diff-basic}
The cost $h(z, \, \bR_i)$ as in \eqref{eq:hp}
is almost surely differentiable at any solution to
\begin{equation}
\label{eq:zstar}
z^* \in \argmax\cb{H(z) - z \cdot q_0 : z \in \mathcal{P}}.
\end{equation}
\end{assumption}

\begin{thm}\label{thm: zero-gap-full}
Under the conditions of Theorem \ref{thm: zero-gap-basic},
suppose furthermore that Assumption \ref{assum: diff-basic} holds.
Then $H(z)$ is differentiable at $z^*$, and we have
\begin{equation}
\label{eq:zerogap}
\rho\p{\nabla H(z^*) - q_0} = H(z^*)-z^* \cdot q_0.
\end{equation}
Furthermore, using prices $p^* = \lambda \, z^*$ with any $\lambda > 0$
asymptotically solves the optimal demand response problem.
\proof
As noted above, \( H(\cdot) \) and \( h(\cdot, \bR) \) are both concave; thus, for any $z \in \RR^d$,
\( H(\cdot) \) is differentiable at \( z \) if and only if \( h(\cdot, \, \bR) \) is differentiable
at \( z \) almost surely \citep[Theorem 7.46]{shapiro2021lectures}.
Assumption \ref{assum: diff-basic} thus implies that $H(\cdot)$ is differentiable
for all $z^* \in \argmax\cb{H(z) - z \cdot q_0 : z \in \mathcal{P}}$.
Given that $z^*$ is a maximizer of $H(z)-z \cdot q_0$ over $\cP$, 
the first-order optimality condition then implies that
$$ \max\cb{ (z-z^*) \cdot \p{\nabla H(z^*)-q_0} : z \in \cP} = 0. $$
Furthermore, by Euler's homogeneous function theorem $z^* \cdot \nabla H(z^*)=H(z^*)$,
and so
$$ \max\cb{ z \cdot \p{\nabla H(z^*)-q_0} - \p{H(z^*) - z^* \cdot q_0} : z \in \cP} = 0. $$
Finally, given the definition of $\cP$ in \eqref{eq: cP}, this implies that
$$ \rho\p{\nabla H(z^*)-q_0} - \p{H(z^*) - z^* \cdot q_0} = 0. $$
By comparing this expression with \eqref{eq: mean-field-DM} and \eqref{eq: mean-field-PM}
we see that the asymptotic costs of the direct control method and pricing with $z^*$
match (recall that $z^*$ is a maximizer of $H(p) - p \cdot q_0$), and thus
pricing with $z^*$ achieves optimal demand response in the large-sample limit. Finally, as discussed following
Definition \ref{defi:HEMS_price}, users with a price-responsive HEMS are unresponsive
to scaling prices by a positive scalar. Thus, pricing with $p^* = \lambda \, z^*$
for any $\lambda > 0$ results in the same consumption profiles---and also solves
the optimal demand-response problem in large samples.
\endproof
\end{thm}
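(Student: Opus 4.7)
The plan is to reduce the claim to a short first-order optimality argument on the compact convex set $\cP$ from \eqref{eq: cP}, once I transfer differentiability from $h(\cdot;\bR)$ to $H$ and exploit positive homogeneity of $H$. I then intend to read off asymptotic optimality of pricing at $z^*$ by matching the limits obtained in the two parts of Theorem \ref{thm: zero-gap-basic}.

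First I would verify that $H$ is differentiable at $z^*$. The function $h(\cdot;\bR)=\min\{q\cdot p : q\in\bR\}$ is concave in $p$ as a pointwise infimum of linear functions, and by Assumption \ref{assum: diff-basic} it is almost surely differentiable at $z^*$. Combined with Lemma \ref{lemm:H}, which gives $H(p)=\EE{h(p;\bR)}$, a standard interchange-of-gradient-and-expectation result for concave integrable integrands should promote this almost-sure pointwise differentiability to differentiability of $H$ at $z^*$, with $\nabla H(z^*)=\EE{\nabla_p h(z^*;\bR)}$.

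Next I would turn to the identity \eqref{eq:zerogap}. Since $z^*$ maximizes the concave differentiable function $z\mapsto H(z)-z\cdot q_0$ over the compact convex set $\cP$, the first-order optimality condition gives $(z-z^*)\cdot(\nabla H(z^*)-q_0)\le 0$ for all $z\in\cP$. Positive homogeneity of degree one of $H$---inherited from $h(\lambda p;\bR)=\lambda h(p;\bR)$ for $\lambda>0$---together with Euler's homogeneous function theorem gives $z^*\cdot\nabla H(z^*)=H(z^*)$, so substituting and invoking the definition of $\cP$ in \eqref{eq: cP} yields
$$\rho\p{\nabla H(z^*)-q_0}=\max_{z\in\cP} z\cdot\p{\nabla H(z^*)-q_0}=H(z^*)-z^*\cdot q_0,$$
which is exactly \eqref{eq:zerogap}.

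Finally I would combine the pieces. Part 1 of Theorem \ref{thm: zero-gap-basic} yields $n^{-1}\hopt_n^{\myom}\to H(z^*)-z^*\cdot q_0$, and Part 2, applicable because $H$ is differentiable at $z^*$, gives that the scaled grid cost from using prices $z^*$ converges to $\rho(\nabla H(z^*)-q_0)$. The identity \eqref{eq:zerogap} equates these two limits, establishing asymptotic optimality of pricing at $z^*$; scale invariance of the HEMS decision \eqref{eq:  smart-device} (noted after Definition \ref{defi:HEMS_price}) then extends this to $p^*=\lambda z^*$ for every $\lambda>0$. The main delicate point will be the gradient-expectation interchange in the first step, which is precisely why the paper imposes differentiability of $h(\cdot;\bR)$ rather than of $H$ directly.
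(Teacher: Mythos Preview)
Your proposal is correct and follows essentially the same route as the paper: transfer differentiability from $h(\cdot;\bR)$ to $H$ via the gradient--expectation interchange (the paper cites \citet[Theorem 7.46]{shapiro2021lectures} for this), apply the first-order optimality condition on $\cP$, use Euler's identity for the positively homogeneous $H$, and read off $\rho(\nabla H(z^*)-q_0)=H(z^*)-z^*\cdot q_0$ from the support-function representation \eqref{eq: cP}. The comparison with the two limits in Theorem \ref{thm: zero-gap-basic} and the scale-invariance argument are also identical.
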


Theorem \ref{thm: zero-gap-full} above showed that, under considerable generality,
pricing with prices of the form $p^* = \lambda \, z^*$ with $\lambda > 0$ and $z^*$
as in \eqref{eq:zstar} solves the demand-response problem. This
abstract result highlights the promise of the pricing mechanism, but also raise
some immediate practical questions such as:
\begin{enumerate}
\item Are energy markets with dynamic prices $p^*$ stable?
\item How should we pick the scale parameter $\lambda > 0$? Price-responsive HEMS
as in Definition \ref{defi:HEMS_price} may be unresponsive to price scale; however,
consumers, grid operators and regulators will of course care about the overall level
of prices.
\item When can we guarantee that optimal prices are positive? Negative prices are
known to sometimes arise in practice when renewable production overwhelms demand;
however, we may believe that healthy demand-response mechanisms would avoid negative
prices.
\end{enumerate}
This section gives some (largely reassuring) answers to these questions.

We start with a positive answer to the first question, and verify that our proposed pricing method
results in stable consumer behavior. Qualitatively, the reason this holds is
our Assumption \ref{assum: diff-basic} that $h(p, \, \bR_i)$
is almost surely differentiable at $p^*$. This assumption reflects a requirement that consumers
are price-responsive at $p^*$, thus enabling us to use prices to shift demand in a predictable way.

\begin{thm}
\label{thm:stable}
Let $z^*$ be an optimal price under the conditions of Theorem \ref{thm: zero-gap-full}.
Then:
\begin{enumerate}
\item For each customer, with probability one, there exists a unique optimal energy consumption profile that optimizes the consumer objective under the price $z^*$, i.e., for consumer $i$ the following set is almost surely a singleton:
\begin{equation}
\Theta_i(z^*) = \cb{q\in \bR_i: q_i \cdot z^* = h(z^*; \, \bR_i)}.
\end{equation}
\item 
The average consumption profile of consumers under the optimal price $z^*$ is unique, i.e., $\E[\Theta_i(z^*)]$ is a singleton,
and furthermore

\begin{equation}
\E[\Theta_i(z^*)]=\Theta(z^*), \ \ \ \ \Theta(z^*) = \cb{ q \in \E[\bR], \, q \cdot z^* = h(z^*; \, \E[\bR])}.
\end{equation}
We denote this average consumption profile by $\bar{q}(z^*)$, i.e., $\Theta(z^*)=\{\bar{q}(z^*)\}$.
\end{enumerate}
\end{thm}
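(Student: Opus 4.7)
The plan is to leverage the differentiability of $H(\cdot)$ at $z^*$ established in Theorem \ref{thm: zero-gap-full}, together with the subgradient inclusion $q_i(z^*) \in \partial h(z^*; \bR_i)$ already derived in \eqref{eq: q_p_subgradient}, to pin down both $\Theta_i(z^*)$ and $\Theta(z^*)$ as singletons. The user cost $h(\,\cdot\,;\bR_i)$ is concave as a pointwise infimum of the linear maps $p \mapsto p \cdot q$ (this holds whether or not $\bR_i$ is itself convex), and for concave functions differentiability at a point is equivalent to the superdifferential at that point being a singleton. So once differentiability is in hand, the two argmin sets collapse.

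For part 1, I would first observe that $\Theta_i(z^*)$ is non-empty by compactness of $\bR_i$ and continuity of $q \mapsto q \cdot z^*$. Then, repeating the calculation in \eqref{eq: q_p_subgradient} verbatim with $q_i(z^*)$ replaced by an arbitrary element $q \in \Theta_i(z^*)$, one obtains
$$
h(p'; \bR_i) \leq h(z^*; \bR_i) + (p'-z^*) \cdot q \qquad \text{for all } p' \in \RR^d,
$$
i.e., $q \in \partial h(z^*; \bR_i)$ in the concave sense. Assumption \ref{assum: diff-basic} guarantees that $h(\,\cdot\,;\bR_i)$ is differentiable at $z^*$ almost surely, so $\partial h(z^*;\bR_i) = \{\nabla h(z^*;\bR_i)\}$ a.s.; combining with non-emptiness yields $\Theta_i(z^*) = \{\nabla h(z^*;\bR_i)\}$ a.s.

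For part 2, because $\Theta_i(z^*)$ is almost surely a singleton, every selection of this random set equals $\nabla h(z^*;\bR_i)$ a.s., and hence
$$
\E[\Theta_i(z^*)] \;=\; \{\E[\nabla h(z^*;\bR_i)]\} \;=\; \{\nabla H(z^*)\},
$$
where the second equality is the identity $\E[\nabla h(z^*;\bR_i)] = \nabla H(z^*)$ established inside the proof of Theorem \ref{thm: zero-gap-basic}. I would then apply the same two-step argument---non-emptiness plus superdifferential inclusion---to the deterministic compact convex set $\E[\bR]$ with cost function $H(\cdot) = h(\,\cdot\,;\E[\bR])$. Theorem \ref{thm: zero-gap-full} asserts $H$ is differentiable at $z^*$, so $\Theta(z^*) \subseteq \partial H(z^*) = \{\nabla H(z^*)\}$ and is non-empty by compactness of $\E[\bR]$, hence equals $\{\nabla H(z^*)\}$. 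Matching the two singletons gives $\E[\Theta_i(z^*)] = \Theta(z^*) = \{\bar q(z^*)\}$ with $\bar q(z^*) = \nabla H(z^*)$.

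The step I expect to require the most care is the non-convexity of $\bR_i$: the textbook identification of argmin sets of a linear functional with the subdifferential of a support function is cleanest for convex sets, and in principle $\Theta_i(z^*)$ could be a strict subset of the face of $\conv(\bR_i)$ exposed by $-z^*$. The saving grace is that only the easy one-sided inclusion $\Theta_i(z^*) \subseteq \partial h(z^*;\bR_i)$ is needed, and this follows directly from \eqref{eq: q_p_subgradient} without passing to the convex hull; once differentiability collapses $\partial h$ to a single point, that point must itself lie in $\bR_i$ because $\Theta_i(z^*)$ is non-empty. Ancillary measurability and integrability issues needed to make sense of $\E[\nabla h(z^*;\bR_i)]$ follow from the $\lVert \bR_i \rVert$-Lipschitz bound on $h(\,\cdot\,;\bR_i)$ and Assumption \ref{assum: R-basic}.
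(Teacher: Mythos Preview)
Your proof is correct and follows essentially the same architecture as the paper's: show the argmin set sits inside the superdifferential of the concave cost, invoke differentiability at $z^*$ to collapse the superdifferential to a single point, then repeat at the mean-field level and match the two singletons via $\E[\nabla h(z^*;\bR_i)]=\nabla H(z^*)$. The one substantive difference is that the paper obtains the \emph{equality} $\partial h(z^*;\bR_i)=\conv(\Theta_i(z^*))$ by citing the Levin--Valadier theorem \citep[Theorem~7.22]{shapiro2021lectures}, whereas you use only the one-sided inclusion $\Theta_i(z^*)\subseteq\partial h(z^*;\bR_i)$ (straight from \eqref{eq: q_p_subgradient}) together with non-emptiness. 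Your route is more elementary and, as you note, sidesteps any worry about whether Levin--Valadier applies cleanly to a non-convex $\bR_i$; the paper's route gives a sharper structural statement but is not needed here since differentiability already forces the superdifferential to be a point.
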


Next, given that our model is agnostic to the scale parameter $\lambda$,
this parameter can simply be chosen to match a target grid revenue.
Given Theorem \ref{thm:stable}, we know that average per-consumer grid revenue from net demand with prices
$p^*$ is stable, and equal to $U(p^*) = (\bar{q}(p^*)-q_0) \cdot p^*$. Thus, given a candidate price schedule $z^*$ (e.g., as in \eqref{eq:zstar}), and a target per-consumer revenue $C$,
and provided that $U(z^*) > 0$, we can achieve optimal demand-response at the
target per-user revenue, i.e., with $U(p^*) = C$, by using with prices\footnote{Under
the proposed prices, revenue is
$(q(p^*)-q_0) \cdot p^* = (q(z^*)-q_0) \cdot p^* =  (q(z^*)-q_0) \cdot z^* \, C / \p{(q(z^*)-q_0) \cdot z^*} = C$.}
\begin{equation}
\label{eq:price_scale}
p^* = C \, z^* \, / \, U(z^*).
\end{equation}
The following result establishes conditions under which $U(z^*) > 0$, and so this
scaling can be applied.

\begin{coro}
\label{coro:nonzero}
Under the conditions of Theorem \ref{thm: zero-gap-full}, suppose furthermore
that for non-negative grid cost function $\rho(.)$ we have $\rho(x) = 0$ if and only if $x = 0$, and that $q_0 \not\in \EE{\bR_i}$.
Then, for any $z^*$ as in \eqref{eq:zstar} we have that $U(z^*)=(\bar{q}(z^*)-q_0) \cdot z^* > 0$.
\end{coro}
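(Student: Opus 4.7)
The plan is to reduce the strict inequality $U(z^*) > 0$ to the non-equality $\nabla H(z^*) \ne q_0$, and then invoke the hypothesis $q_0 \not\in \E[\bR]$. The strategy exploits three facts already in hand: the representation of $\bar q(z^*)$ from Theorem \ref{thm:stable}, the duality identity \eqref{eq:zerogap} from Theorem \ref{thm: zero-gap-full}, and the positive-definiteness assumption on $\rho$.

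First I would compute $U(z^*)$ in closed form. By Theorem \ref{thm:stable}, $\bar q(z^*)$ is the unique element of $\E[\bR]$ with $\bar q(z^*)\cdot z^* = h(z^*; \E[\bR]) = H(z^*)$, so
\[
U(z^*) \;=\; (\bar q(z^*) - q_0)\cdot z^* \;=\; H(z^*) - z^*\cdot q_0.
\]
Applying the zero-gap identity \eqref{eq:zerogap} of Theorem \ref{thm: zero-gap-full}, this equals $\rho(\nabla H(z^*) - q_0)$. By the positive-definiteness assumption on $\rho$, establishing $U(z^*) > 0$ is therefore equivalent to establishing $\nabla H(z^*) \ne q_0$.

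The key intermediate step is then to show $\nabla H(z^*) \in \E[\bR]$. Assumption \ref{assum: diff-basic} gives that $h(\cdot;\bR_i)$ is a.s.\ differentiable at $z^*$, and the argument used in part 2 of Theorem \ref{thm: zero-gap-basic} identifies the consumer's optimal consumption with the gradient, namely $q_i(z^*) = \nabla h(z^*;\bR_i)$ almost surely, with $\E[q_i(z^*)] = \nabla H(z^*)$. Since $q_i(z^*) \in \bR_i$ a.s.\ and $\E\|q_i(z^*)\| < \infty$, the variable $q_i(z^*)$ is a selection of $\bR_i$ in the sense of \eqref{eq:selection}, and so by the very definition of $\E[\bR]$ its expectation lies in $\E[\bR]$. (As a sanity check, one can alternatively verify this via Theorem \ref{thm:stable}: Euler's identity gives $z^*\cdot\nabla H(z^*)=H(z^*)$, which together with $\nabla H(z^*)\in\E[\bR]$ forces $\nabla H(z^*)\in\Theta(z^*)=\{\bar q(z^*)\}$, matching the identification $\nabla H(z^*) = \bar q(z^*)$.)

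Combining these, $\nabla H(z^*) \in \E[\bR]$ while $q_0 \notin \E[\bR]$ by hypothesis, so $\nabla H(z^*) - q_0 \ne 0$; the positive-definiteness of $\rho$ then yields $U(z^*) = \rho(\nabla H(z^*) - q_0) > 0$, as claimed. I do not anticipate any real obstacle here -- every ingredient has been set up in the preceding theorems; the one step that deserves care is justifying that $\nabla H(z^*)$ is genuinely an expectation of a measurable selection (rather than an abstract subgradient), which is exactly where the differentiability provided by Assumption \ref{assum: diff-basic} becomes essential.
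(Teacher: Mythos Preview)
Your proposal is correct and follows essentially the same route as the paper: both arguments reduce to the identity $U(z^*)=\rho(\nabla H(z^*)-q_0)$, then use $\nabla H(z^*)=\bar q(z^*)\in\E[\bR]$ together with $q_0\notin\E[\bR]$ and the positive-definiteness of $\rho$. The only organizational difference is that you invoke the packaged identity \eqref{eq:zerogap} directly, whereas the paper re-derives it from the first-order optimality condition at $z^*$; and you take a slight detour through the selection argument to place $\nabla H(z^*)$ in $\E[\bR]$, while the paper simply uses $\bar q(z^*)=\nabla H(z^*)$ with $\bar q(z^*)\in\E[\bR]$ immediate from Theorem~\ref{thm:stable} (which you note yourself as the ``sanity check'').
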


Finally, we verify below that prices are in fact positive, as expected, whenever the following two conditions stated in the next assumption are satisfied. 

\begin{assumption}\label{assum: monotone}

    We assume that the average indifference set stochastically dominates available renewables, i.e., $q \geq q_0$ componentwise for all $q \in \EE{\bR_i}$. 
\end{assumption}

\begin{coro}
\label{coro:positive}
Under conditions of Theorem \ref{thm: zero-gap-full} and Assumption \ref{assum: monotone},
 Suppose furthermore that the grid cost function \( \rho(\cdot) \) satisfies a monotonicity property\footnote{This
 monotonicity property is satisfied, e.g., for grid cost functions given by the $\ell_p$ norm of net demand $\|q-q_0\|_{\ell_p}$, or the non-negative component of net demand $\|\max(q-q_0,0)\|_{\ell_p}$.}
 whereby for each pair $q ,\, r \in \RR^d$ if either  $0 \leq r_j \leq q_j$ or $q_j \leq r_j = 0$ for each $j = 1, \, \ldots, \, d$, then also $\rho(r) \leq \rho(q)$.
Then the optimal prices $p^*$ as described in
the statement of Theorem \ref{thm: zero-gap-full} satisfy $p^* \geq 0$.
\end{coro}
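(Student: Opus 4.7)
The plan is to introduce the modified cost function $\tilde{\rho}(x) := \rho(x^+)$ with $x^+ = (\max(x_1,0), \ldots, \max(x_d,0))$, and show that (i) $\tilde{\rho}$ is itself sublinear with dual representing set $\tilde{\cP} = \cP \cap \RR_+^d$, and (ii) under Assumption \ref{assum: monotone}, $\tilde{\rho}$ and $\rho$ induce the same mean-field problem, so that the maximum of $H(z) - z \cdot q_0$ over $\cP$ is attained at some $z^* \in \cP \cap \RR_+^d$. Theorem \ref{thm: zero-gap-full} then delivers optimal prices $p^* = \lambda z^* \geq 0$.

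First I would verify that $\tilde{\rho}$ is positively $1$-homogeneous (immediate, since $(\alpha x)^+ = \alpha x^+$ for $\alpha \geq 0$) and convex. Convexity uses three ingredients: the componentwise convexity of $x \mapsto x^+$, the fact that $\rho$ is nondecreasing on $\RR_+^d$ (which is precisely the first branch of the assumed monotonicity property applied to $r \leq q$ inside $\RR_+^d$), and the convexity of $\rho$. Concretely, $(\lambda x + (1-\lambda) y)^+ \leq \lambda x^+ + (1-\lambda) y^+$ coordinatewise, and monotonicity of $\rho$ on $\RR_+^d$ followed by convexity of $\rho$ gives $\tilde{\rho}(\lambda x + (1-\lambda) y) \leq \lambda \tilde{\rho}(x) + (1-\lambda) \tilde{\rho}(y)$. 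Hence $\tilde{\rho}$ is sublinear and continuous, and by the same argument as for $\rho$ in \eqref{eq: cP} it is the support function of a compact convex set $\tilde{\cP}$.

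Next I would identify $\tilde{\cP} = \cP \cap \RR_+^d$. For the $\subseteq$ direction, testing $z \in \tilde{\cP}$ against $x = -e_j$ gives $-z_j \leq \tilde{\rho}(-e_j) = \rho(0) = 0$, hence $z \geq 0$; then for arbitrary $x$, $z \cdot x \leq z \cdot x^+ \leq \tilde{\rho}(x^+) = \rho(x^+) \leq \rho(x)$, where the final step uses the monotonicity property with $r = x^+$, $q = x$. For $\supseteq$, if $z \geq 0$ and $z \in \cP$, then for any $x$, $z \cdot x \leq z \cdot x^+ \leq \rho(x^+) = \tilde{\rho}(x)$.

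Finally, Assumption \ref{assum: monotone} guarantees $q - q_0 \geq 0$ for every $q \in \E[\bR]$, so $\tilde{\rho}(q - q_0) = \rho(q - q_0)$ and the two minima over $\E[\bR]$ coincide. Applying Sion's minimax exactly as in the proof of Theorem \ref{thm: zero-gap-basic} to each side yields $\max_{z \in \cP} \{H(z) - z \cdot q_0\} = \max_{z \in \tilde{\cP}} \{H(z) - z \cdot q_0\}$; since $\tilde{\cP} \subseteq \cP$, any maximizer attained on $\tilde{\cP}$ is simultaneously optimal on $\cP$. One therefore selects $z^* \in \cP \cap \RR_+^d$, and Theorem \ref{thm: zero-gap-full} yields $p^* = \lambda z^* \geq 0$. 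I expect the main subtlety to be the convexity of $\tilde{\rho}$ and the matching identification of $\tilde{\cP}$: both hinge on the chain $z \cdot x \leq z \cdot x^+ \leq \rho(x^+) \leq \rho(x)$ lining up correctly, with the last inequality being precisely where the monotonicity property of $\rho$ is essential.
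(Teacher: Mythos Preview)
Your proof is correct but follows a different route from the paper. The paper argues directly at the level of the dual variable: it shows (i) that the map $p \mapsto p_+$ preserves membership in $\cP$ (using the monotonicity property of $\rho$, via the same vector $r'$ you implicitly construct), and (ii) that under Assumption \ref{assum: monotone} the objective $H(z) - z\cdot q_0 = \min_{q \in \E[\bR]}(q-q_0)\cdot z$ does not decrease when $z$ is replaced by $z_+$, since $q - q_0 \geq 0$. Combining (i) and (ii), any maximizer $z^*$ can be replaced by $z^*_+$, yielding a nonnegative optimal price.

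Your approach is more structural: you build an auxiliary sublinear cost $\tilde{\rho}(x) = \rho(x^+)$, identify its support set as $\tilde{\cP} = \cP \cap \RR_+^d$, and then exploit that $\rho$ and $\tilde{\rho}$ coincide on the feasible net-demand region to equate the two dual maxima. This buys a clean geometric picture (the nonnegative face of $\cP$ is itself the dual ball of a bona fide cost), at the price of having to verify convexity of $\tilde{\rho}$ and the set identity $\tilde{\cP} = \cP \cap \RR_+^d$. The paper's projection argument is shorter and avoids introducing a new cost function, but both proofs ultimately rest on the same two uses of the hypotheses: the monotonicity of $\rho$ to control what happens when negative coordinates are zeroed out, and Assumption \ref{assum: monotone} to ensure the relevant net demands live in $\RR_+^d$. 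Note that, like the paper, you establish the existence of a nonnegative maximizer $z^*$ rather than that \emph{every} maximizer is nonnegative; this matches the intended reading of the corollary.
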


\section{A First-order Algorithm for Optimal Pricing}
\label{sec: opt-price}

Given the theoretical foundation for pricing mechanisms, we develop a practical algorithm for computing optimal dynamic prices. Our approach relies on first-order optimization methods and requires only minimal information from HEMS, making it both efficient and scalable.

In the previous section, we found that dynamic pricing is mean-field
optimal for demand response under our indifference-set model, and any
prices $p^*$ of the form
\begin{equation}
\label{eq:opt_price_summary}
p^* = \lambda \, z^*, \ \ \ \
z^* \in \arg\max\limits_{z\in \cP} \cb{H(z)-z \cdot q_0}, \ \ \ \
H(z) = \EE{h(z; \, \bR_i)},
\end{equation}
for any $\lambda > 0$, where $h(z; \, \bR_i)$ is the minimal spending for a
consumer with indifference set $\bR_i$ and $q_0$ is the scaled renewable production.
In order to make use of the result in practice, we need a way of learning prices from data.
One natural way of doing so is via empirical maximization with
\begin{equation}
\label{eq:opt_price_empirical}
\hat{p} = \lambda \, \hat{z}, \ \ \ \
\hat{z} \in \arg\max\limits_{z\in \cP} \cb{\widehat{H}(z)-z \cdot q_0}, \ \ \ \
\widehat{H}(z) = \frac{1}{n}\sum\limits_{i=1}^n h(z; \, \bR_i),
\end{equation}
where again $\lambda > 0$ is planner-chosen scale parameter. Below, we verify that
such empirical maximization achieves optimal demand response in large samples.

For any $z\in \cP$ at which $H(.)$ is differentiable, the mean-field suboptimality gap (i.e., then mean-field gap between deploying this price and the direct method grid risk value) following \eqref{eq: mean-field-DM} and $\bar{q}(z)$ as in Theorem \ref{thm:stable} can be characterized as
\begin{equation}\label{eq:Delta}
\Delta(z)=z\cdot \big(\bar{q}(z) -q_0\big)-\mathsf{R}^\myom\,.
\end{equation}
Recall that when \( H(\cdot) \) is differentiable at \( z \), the average consumption \( \bar{q}(z) \) is the unique solution to \( \min_{q \in \mathbb{E}[\bR]} z \cdot (q - q_0) \) (see proof of Theorem \ref{thm:stable}). With this notation, Theorem \ref{thm: zero-gap-full} implies that for \( z^* \) as defined in \eqref{eq:opt_price_summary}, we have \( \Delta(z^*) = 0 \).

\begin{thm}
\label{thm:empirical_max}
Under Assumptions \ref{assum: R-basic} and 
\ref{assum:sigma}, let $\hat{z}_n$ be any sequence of prices satisfying 
\eqref{eq:opt_price_empirical} for $n$ consumers. Then 
\begin{equation}
\lim\limits_{n\to \infty} \Delta(\hz_n) =_{a.s.} 0,
\end{equation}
i.e., pricing with $\hat{z}_n$ is asymptotically optimal.
\end{thm}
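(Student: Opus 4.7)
The plan is the standard M-estimation consistency argument adapted to the concave mean-field program \eqref{eq:opt_price_summary}: establish uniform almost-sure convergence of the empirical objective $\psi_n(z) := \widehat{H}_n(z) - z\cdot q_0$ to its population counterpart $\psi(z) := H(z) - z\cdot q_0$ on the compact set $\mathcal{P}$, and then read off the conclusion from standard value/argmax consistency. Before starting, it is helpful to rewrite $\Delta$ in a more convenient form: since $H(z) = \min\{q\cdot z : q \in \E[\bR]\}$ by Lemma \ref{lemm:H} is positively homogeneous of degree one in $z$, Euler's homogeneous function theorem yields $z\cdot \nabla H(z)=H(z)$ wherever $H$ is differentiable, and at such points $\bar{q}(z)=\nabla H(z)$ is the unique superdifferential vector. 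Hence
\[
\Delta(z) \;=\; z\cdot \bigl(\bar{q}(z)-q_0\bigr) - \mathsf{R}^{\myom} \;=\; H(z)-z\cdot q_0 - \mathsf{R}^{\myom} \;=\; \psi(z)-\max_{z\in \mathcal{P}}\psi(z),
\]
which is non-positive on $\mathcal{P}$ with equality only at maximizers of $\psi$, and which in fact remains well-defined for every $z\in \mathcal{P}$ regardless of differentiability of $H$.

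For pointwise convergence, fix $z\in \mathcal{P}$. The summands $h(z;\bR_i)$ are i.i.d.\ with $|h(z;\bR_i)|\le \|z\|\cdot \|\bR_i\|$, which is integrable by Assumption \ref{assum: R-basic} and uniform boundedness of $\|z\|$ on the compact set $\mathcal{P}$. The SLLN gives $\widehat{H}_n(z)\to H(z)$ almost surely for each fixed $z$, and intersecting over a countable dense $\{z_k\}\subset \mathcal{P}$ yields a single event of probability one on which $\widehat{H}_n(z_k)\to H(z_k)$ for every $k$. To upgrade to uniform convergence, note that $h(\cdot;\bR_i)$ is concave as a pointwise infimum of linear functions, so $\widehat{H}_n$ and $H$ are finite concave functions on all of $\RR^d$. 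Rockafellar's Theorem 10.8 (\emph{Convex Analysis}), applied to $-\widehat{H}_n$ and $-H$, asserts that pointwise convergence of finite convex functions on a dense subset of an open convex set upgrades to uniform convergence on every closed bounded subset of that set. Taking the ambient open set to be $\RR^d$ delivers
\[
\sup_{z\in \mathcal{P}}\bigl|\widehat{H}_n(z)-H(z)\bigr|\;\longrightarrow\; 0 \quad \text{a.s.,}
\]
and hence $\sup_{z\in \mathcal{P}}|\psi_n(z)-\psi(z)|\to 0$ almost surely.

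To conclude, combine uniform convergence with the definitions of $\hat{z}_n$ and $z^*$ in a two-sided sandwich: $\psi(\hat{z}_n)\le \mathsf{R}^{\myom}$ by definition of $\mathsf{R}^{\myom}$, while $\psi_n(\hat{z}_n)\ge \psi_n(z^*)\to \psi(z^*)=\mathsf{R}^{\myom}$, and uniform convergence forces $\psi_n(\hat{z}_n)-\psi(\hat{z}_n)\to 0$. Combining yields $\psi(\hat{z}_n)\to \mathsf{R}^{\myom}$ almost surely, i.e.\ $\Delta(\hat{z}_n)\to 0$. The main obstacle is the concave pointwise-to-uniform upgrade, for which one must verify that the common effective domain is open and contains $\mathcal{P}$; here this is automatic because $h(z;\bR)$ is finite for every $z\in \RR^d$ (since $\bR$ is almost surely compact) and $\mathcal{P}$ is a compact subset of $\RR^d$. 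A minor subtlety worth flagging is that $\Delta(z)$ as originally stated requires $H$ to be differentiable at $z$, which may fail for the empirical maximizer $\hat{z}_n$; the reformulation $\Delta(z)=\psi(z)-\mathsf{R}^{\myom}$ above makes the claim unambiguous without invoking Assumption \ref{assum: diff-basic}.
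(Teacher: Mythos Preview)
Your proof is correct and reaches the same conclusion as the paper's, but the route to uniform convergence is genuinely different. The paper obtains $\sup_{z\in\cP}|\widehat{H}_n(z)-H(z)|\to 0$ by invoking the random-set strong law \eqref{eq: as-hasudorf}, translating Hausdorff convergence of $\frac{1}{n}\bigoplus_i\bR_i$ into uniform convergence of support functions over the unit sphere, and then extending to $\cP$ by positive homogeneity of $h(\cdot;\bR)$. You instead establish pointwise SLLN convergence on a countable dense set and upgrade via Rockafellar's Theorem 10.8, exploiting concavity of $h(\cdot;\bR)$ directly. Your approach is more self-contained in that it avoids the random-set machinery altogether; the paper's approach keeps continuity with the set-valued framework built up in Section~\ref{sec:theory} and reuses the Artstein--Vitale LLN already in play. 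The closing sandwich arguments---$\psi_n(\hat z_n)\ge \psi_n(z^*)\to \psi(z^*)$ combined with uniform convergence---are essentially identical in both. One small comment on your final caveat: even without differentiability, any selection $\bar q(z)\in\Theta(z)$ satisfies $z\cdot\bar q(z)=H(z)$ by definition of the argmin, so the original expression \eqref{eq:Delta} is already unambiguous as a function of $z$; your reformulation $\Delta(z)=\psi(z)-\mathsf{R}^{\myom}$ makes this explicit rather than resolving an actual ambiguity.
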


The next question is then how to efficiently maximize the function \smash{$\widehat{H}(z)$}.
At face value, the problem \eqref{eq:opt_price_empirical} looks like a
zeroth-order optimization problem: We seek to maximize a random function by taking draws
at different parameter values. In general, zeroth-order problems are solvable
but not particularly well-behaved. In the best case scenario where we can noiselessly
query the same unit multiple times, the error bound of zeroth-order optimization
are a factor $\sqrt{d}$ worse than what could typically be achieved via first-order
methods with access to gradients \citep{duchi2015optimal}.\footnote{These
methods proceed by effectively taking numerical gradients by evaluating
$h(z, \, \bR_i), \, h(z', \, \bR_i)$ for nearby values of $z, \, z'$.}
In addition, the situation gets much worse if there's any noise when the grid queries
each HEMS to get $h(z, \, \bR_i)$ (or if it's only possible to query each HEMS once);
in this case, zeroth-order optimization falls short of first-order behavior even
in terms of its rate of convergence \citep{shamir2013complexity}.

Luckily, however, the proof of Theorem \ref{thm: zero-gap-basic} reveals that
our setting affords us with natural estimates of the gradient of the objective
in \eqref{eq:opt_price_empirical}.
Since $\widehat{H}_n(\cdot)$ is a concave function, by an application of
Alexandrov's theorem we know that for almost every $z\in \cP$ we have
that $\widehat{H}_n(z)$ is differentiable at $z$. In addition, given the
formulation \eqref{eq:  smart-device}, it can be observed that $q_i(z)$
is a sub-gradient of $h(z; \, \bR_i)$ (see arguments before \eqref{eq: q_p_subgradient}
for more details). Putting these two results together, we realize that for
almost every $z \in \cP$, $\widehat{H}_n(.)$ is differentiable with derivative
\begin{equation}\label{eq: sample-avg-gradient}
\nabla \widehat{H}_n(z)=\frac{1}{n}\sum\limits_{i=1}^n q_i(z).
\end{equation}
In other words, we can evaluate the gradient of our objective simply by observing
the average consumption choice of each consumer at the current proposed prices.

The upshot is that we can compute system-level welfare gradients from data collected
in equilibrium and thus, as argued in \citet{wager2021experimenting}, can efficiently optimize the system using
familiar 1st-order optimization methods (all while inheriting their fast convergence properties).
Specifically, one can proceed via variants of the following simple meta-algorithm.
Given a current candidate price schedule $z_k \in \RR^d$:
\begin{itemize}
\item Send a (potentially scaled)\footnote{Prices could be scaled, e.g., using \eqref{eq:price_scale}.
Recall that, under our indifference-set model, consumer consumption is invariant to scaling prices
by a constant positive factor.}
price $p_k = \lambda \, z_k$ with $\lambda > 0$ to some set of HEMS $\mathcal{N}_k$.
\item Observe the average consumption $\bar{q}_k = \frac{1}{|\mathcal{N}_k|}\sum\limits_{i \in \mathcal{N}_k} q_i(p_k)$.
\item Form a gradient $g_k = \bar{q}_k - q_0$, and use it to update $z_k$ and obtain a new $z_{k+1} \in \cP$.
\end{itemize}
There is an extensive literature on first-order algorithms that can be used to implement this strategy \citep{bertsekas1999,parikh2014proximal}.
For example, one could use standard projected or constrained gradient descent algorithms that repeatedly
query all consumers (i.e., set $\mathcal{N}_k = \cb{1, \, \ldots, \, n}$), or
stochastic alternatives that learn by sequentially going through
the sample and only querying each customer once (i.e., with $\mathcal{N}_k = \cb{k}$).

\begin{rmk}
For a given price \(p\), the consumption profiles \(q_i(p)\) can be determined in two ways---resulting in two very different ways of running the above algorithm in practice. The first method is through actual experimentation, where users are exposed to the price \(p\) and their consumption profiles are observed after \(d\) periods. This approach can be time-consuming as it requires direct experimentation to identify the optimal price. The second method involves a communication system between the grid operator and smart management systems. In this approach, the grid operator briefly communicates a set of prices and queries users for their selected consumption profiles. The optimal price is then chosen based on the responses from users. This method can be significantly more efficient but requires more advanced Home Energy Management Systems (HEMS) that can communicate with the grid operator.
\end{rmk}

\section{Case Study: Pre-cooling in Phoenix, Arizona}\label{sec: exp}

In this section, we examine a sample grid of households equipped with Home Energy Management Systems (HEMS) on a summer day from Phoenix, Arizona, a particularly relevant testbed due to its extreme heat and abundant solar generation. To simplify the analysis, we only considered household electricity usage for cooling. In this setting,  homeowners input a desired range of indoor temperatures into their energy management systems, and then HEMS calculates a power trajectory based on the outside temperature, the house's thermal properties, user's preferences, and the electricity price.

For the outside temperature, we use the average hourly temperature data\footnote{\url{https://www.visualcrossing.com/weather/weather-data-services}} of Phoenix, Arizona in July 2023. We suppose that summer days in Phoenix are relatively consistent, using data from \( n = 31 \) days to get the sample mean \( \hat{\mu}_d^{\mathsf{temp}} \) and the sample covariance matrix \( \hat{\Sigma}_d^{\mathsf{temp}} \) for \( d = 24 \) hours. {In the next step, we follow the indoor temperature dynamics as given in Example \ref{ex: heat-dynamics}, i.e.,
\smash{$T^{\mathsf{in}}_{it} = T^{\mathsf{in}}_{i(t-1)} + \alpha_i (T^{\mathsf{out}}_{it} - T^{\mathsf{in}}_{i(t-1)}) + \beta_i q_{it}$},
where $\alpha_i \in \mathbb{R}$ characterizes the $i$-th house's thermal insulation, and $\beta_i < 0$ quantifies its cooling system's effectiveness.}
In our model, we assume that the outside temperature is the same for all households, and that each house has a fixed preference to maintain its inside temperature within the interval [20, 25] degrees, with an initial temperature of \(T^{\mathsf{in}}_0 = 24\) degrees. Following the temperature dynamics \eqref{eq: linear-heat}, we suppose heterogeneity across users for thermal parameters \( \alpha_i \) and \( \beta_i \), and 
let\footnote{These choices for $\alpha_i$ and $\beta_i$ are made to obtain proportionally meaningful energy levels for the consumption of HVAC systems over one hour. We also intentionally choose small $\alpha_i$ values (indicating good insulation properties) to better accommodate pre-cooling.}
\begin{align*}
\alpha_i&\overset{\mathsf{iid}}{\sim} \mathsf{Unif}(0.05,0.08)\,,\\
\beta_i &\overset{\mathsf{iid}}{\sim} \mathsf{Unif}(-0.35,-0.25)\,.
\end{align*}
We use Arizona's hourly renewable energy data\footnote{\url{https://www.eia.gov/opendata/browser/electricity/rto}} for solar in July 2023 to obtain the sample mean \( \hat{\mu}_d^{\mathsf{renewable}} \) and the sample covariance matrix \( \hat{\Sigma}_d^{\mathsf{renewable}} \) for available renewable.
We consider the following grid cost function parametrized by $s\ge 1$:
\begin{align}\label{eq: sigma-s}
\sigma_s(Q; Q_0) = \left\| \max(Q - Q_0, 0) \right\|_{\ell_s} \,.
\end{align}
This cost function only takes into account \textit{positive} net demand values, which require the operation of fuel-based power plants to meet the additional demand.

We first focus on the population-level (average values) quantities of outside temperature and the amount of available renewable to derive optimal dynamic price signals. Specifically, we consider the outside temperature $\hat{\mu}^{\mathsf{temp}}_d$ and the available renewable $\hat{\mu}_d^{\mathsf{renewable}}$ for $N=80$ users, and we compute the optimal price $p^*_s$ for various grid cost functions $\sigma_s$ using the Frank-Wolfe method for $40k$ iterations. To obtain prices that are periodic over 24 hours---ensuring that the price values for period 1 and period 24 are close in value---we consider a problem horizon of 72 hours. In this approach, the amounts of renewable and outside temperatures are replicated three times, creating a 72-hour period. We then solve the optimization problem over this 72-hour period and report the price values for the middle 24 hours as the final obtained prices. Figure \ref{fig:tmp-2} shows the optimal prices normalized by revenue for $ s = 1, 2, 4, $ and $ +\infty $. Here, we set the flat-rate pricing to be $p_0=\mathbf{1}_d$ (all ones), and then scale all $p^*_s$ values such that the revenue under each pricing mechanism equals that of the flat-rate pricing.

We next use the OpenDSSDirect\footnote{\url{https://pypi.org/project/opendssdirect.py/}} package to simulate the energy grid and solve the power flow equations, considering both active and reactive power. We suppose that HVAC system power factors are drawn i.i.d. from $\unif(0.8,0.95)$, having available active power for each household, this would allow us to compute the reactive power as well. Each household cooling system is connected to buses with a nominal voltage of $120$ volts. After solving the power flow equation for this grid, we computed the voltage variations at each period. Figure \ref{fig:voltages} shows the boxplots of the percentage variation in voltage (averaged across users) over $d$ periods under different pricing schemes. It is evident that the overall voltage variations of devices remain within a 0.36$\%$ deviation from the nominal value at the connected buses, demonstrating the system's stability under varying loads and dynamic pricing

For obtained prices $p^*_s$, we follow the HEMS response function \eqref{eq: smart-device} to obtain users consumption profiles. We first compute the total demand and inside temperatures in response to the price signal $p^*_s$ for $s=1,2,4,$ and $+\infty$. The results are shown in Figures \ref{fig: avg-consumption} and \ref{fig: avg-temp} respectively for net demand and inside temperatures. Under dynamic pricing, the average indoor temperatures exhibit a pre-cooling phase before reaching the high-demand periods, resulting in a different demand curve. {In addition, a detailed view of the temperature profiles of all consumers under different price signals is shown in Figure \ref{fig: temperature_dp_detailed}, where, in each sub-figure, thinner curves represent the indoor temperature dynamics of individual consumers throughout a full day. It can be observed that the optimal price for the grid risk function $\sigma_s(.)$ incentivizes a larger portion of consumers, and more strongly, to pre-cool their houses for larger values of $s$. The major pre-cooling occurs during midday hours (10–15), when abundant renewable energy is available, which helps mitigate the sharp ramp-up during the late evening around 20:00.} For $\sigma_1,\sigma_2,\sigma_4$, and $\sigma_{\infty}$ the total grid cost function under flat-rate pricing is respectively 2147.56, 619.29, 347.21, and 236.12, whereas when the optimal dynamic prices $p^*_s$ are deployed, the obtained total grid cost values are respectively 2041.93, 515.96, 250.88, and 153.52. This implies approximately 4.9\%, 16.7\%, 27.7\%, and 35.0\% cost savings, respectively.

\begin{figure}[p]
    \begin{subfigure}[t]{0.48\textwidth} 
        \centering
        \includegraphics[scale=0.2]{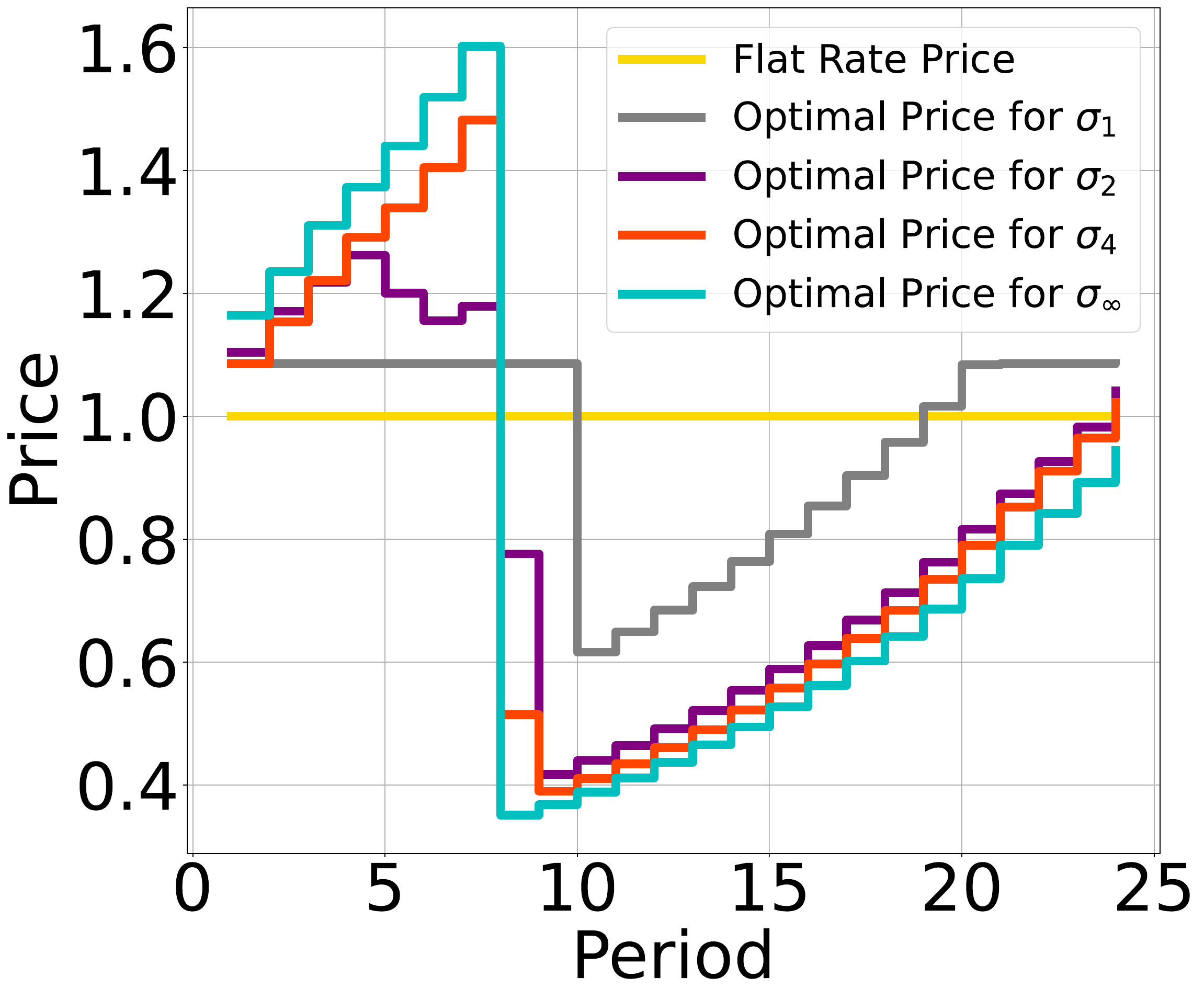}
        \caption{Values of optimal $p^*_s$ normalized by revenue}
        \label{fig:tmp-2}
    \end{subfigure}
    \hfill
    \begin{subfigure}[t]{0.48\textwidth} 
        \centering
        \includegraphics[scale=0.23]{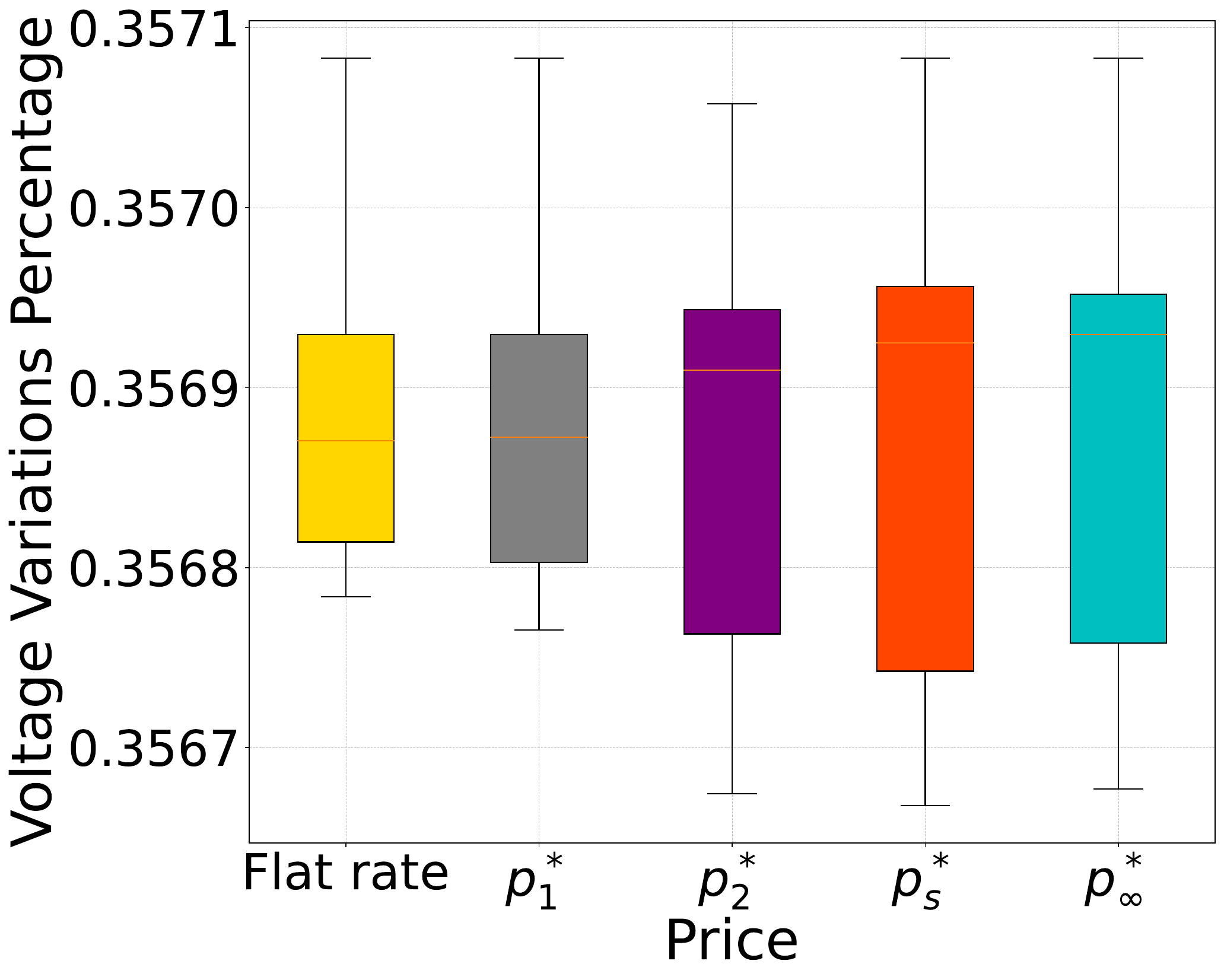} 
        \caption{Average voltage variations of different prices}
        \label{fig:voltages}
    \end{subfigure}
    \hfill

    \vspace{1cm} 
    
       \begin{subfigure}[t]{0.48\textwidth} 
        \centering
        \includegraphics[scale=0.22]{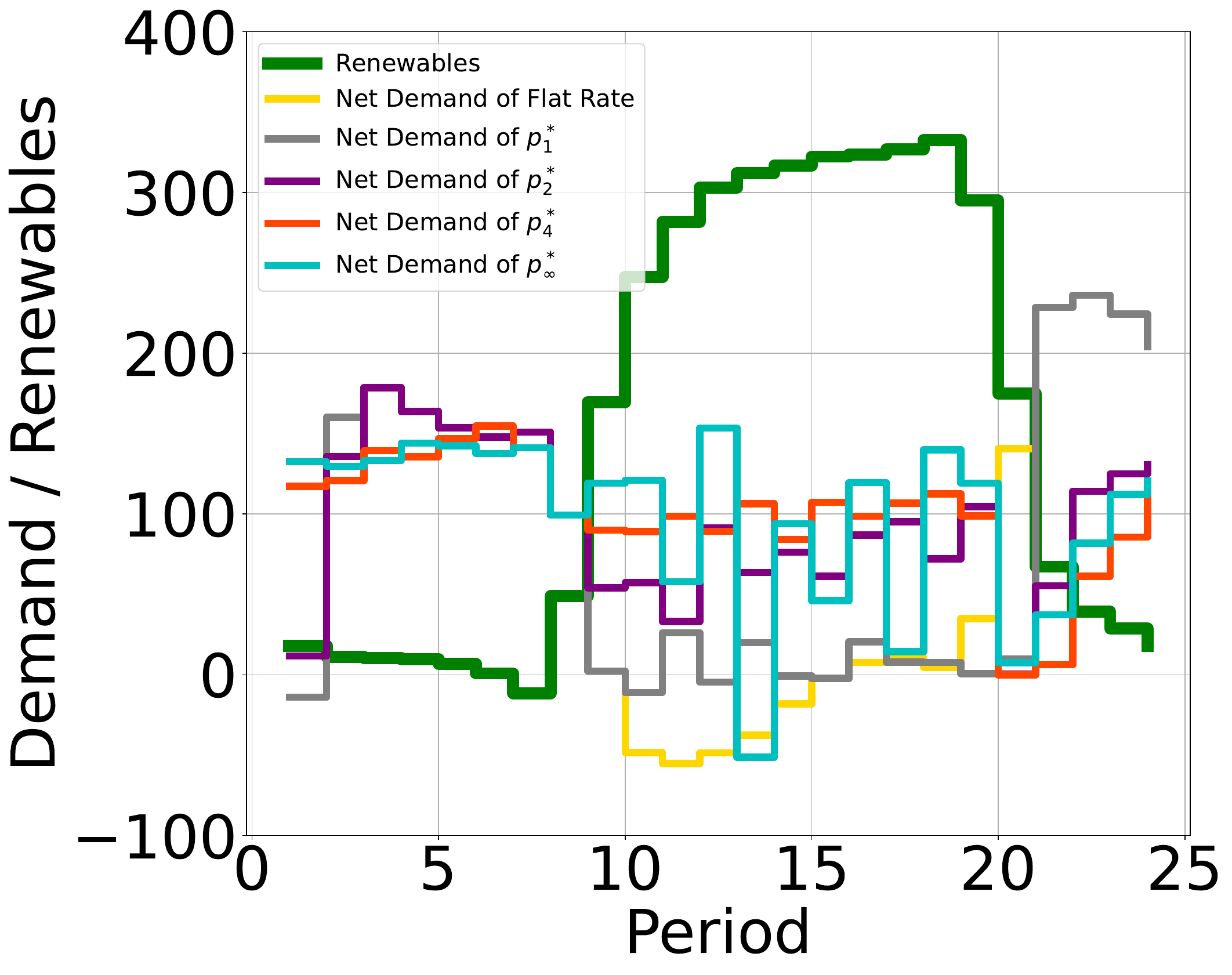}
        \caption{Demand curves of price $p^*_s$ and flat-rate pricing}
        \label{fig: avg-consumption}
    \end{subfigure}%
    \begin{subfigure}[t]{0.48\textwidth}
        \centering
        \includegraphics[scale=0.20]{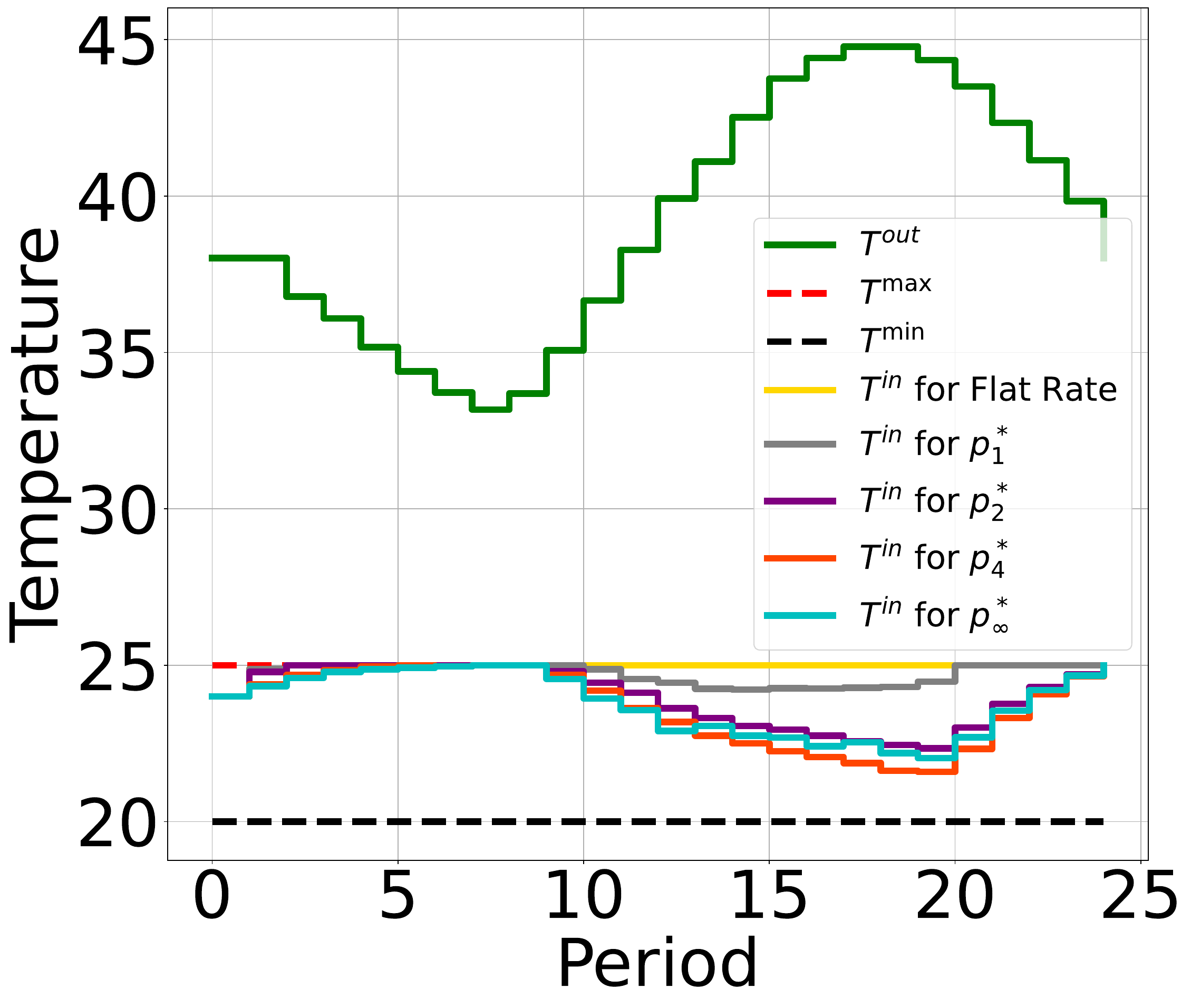}
        \caption{Inside average temperatures for $p^*_s$}
        \label{fig: avg-temp}
    \end{subfigure}
    \hfill 
    
    \caption{Results of numerical experiments evaluating the performance of dynamic pricing. Figure \ref{fig:tmp-2} shows the optimal price signal $p^*_s$ obtained for a variety of grid risk function $\sigma_s(.)$. Figure \ref{fig:voltages} shows voltage variations for $d$ periods across different pricing mechanisms. Figures \ref{fig: avg-consumption} and \ref{fig: avg-temp} display the demand curves and inside temperature profiles under $p^*_s$ and flat-rate pricing, respectively. Overall, deploying the dynamic pricing $p^*_s$ instead of flat-rate pricing results in cost savings of approximately 4.9\%, 16.7\%, 27.7\%, and 35.0\% for the grid cost functions $\sigma_1$, $\sigma_2$, $\sigma_4$, and $\sigma_\infty$, respectively.}
\end{figure}

\begin{figure}[p]
    \centering
     \begin{subfigure}[b]{0.19\textwidth}
        \centering
        \includegraphics[width=\textwidth]{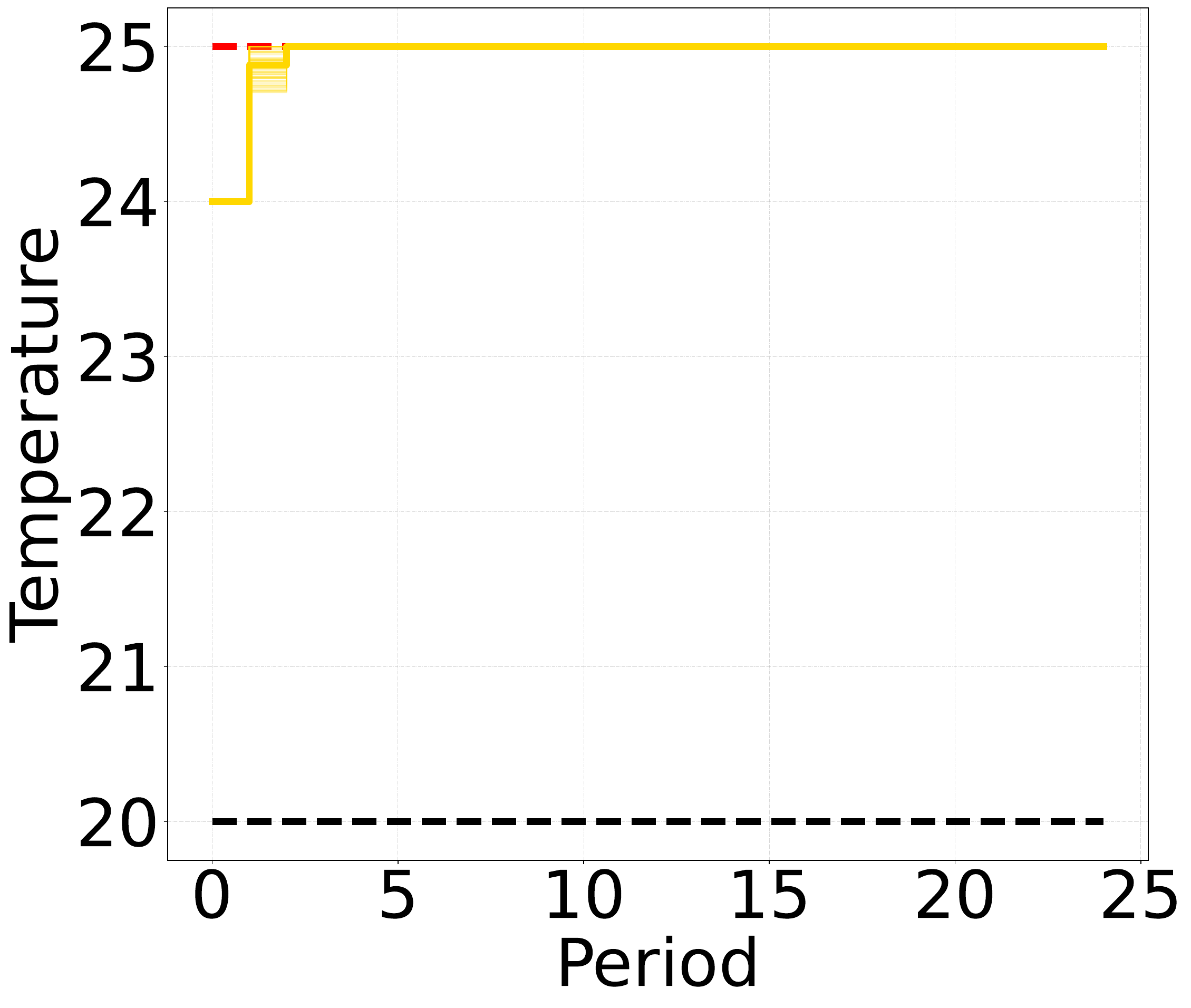}
        \caption{Flat Rate}
    \end{subfigure}
    \begin{subfigure}[b]{0.19\textwidth}
        \centering
        \includegraphics[width=\textwidth]{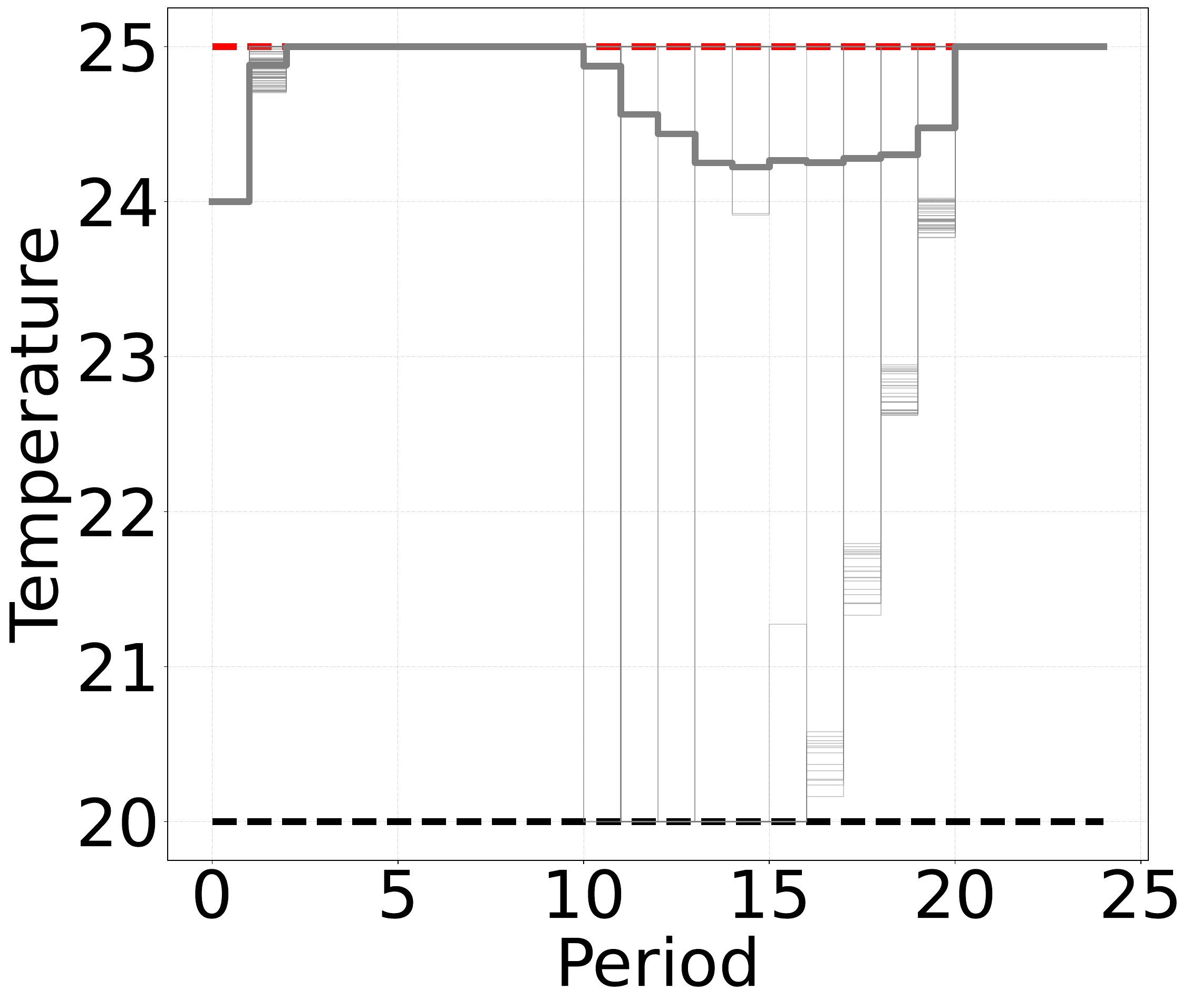}
        \caption{$p^*_1$}
    \end{subfigure}
    \begin{subfigure}[b]{0.19\textwidth}
        \centering
        \includegraphics[width=\textwidth]{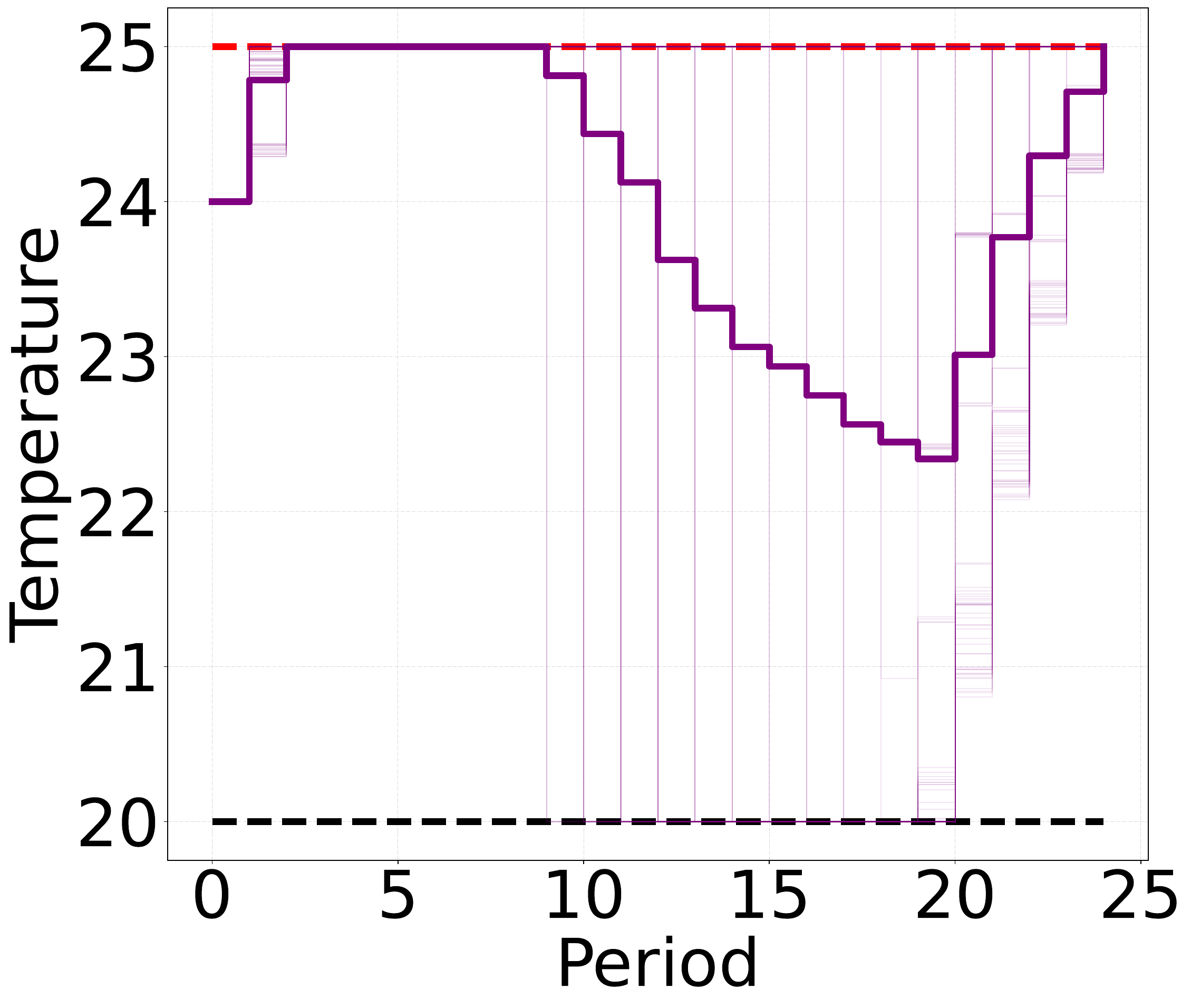}
        \caption{$p^*_2$}
    \end{subfigure}
    \begin{subfigure}[b]{0.19\textwidth}
        \centering
        \includegraphics[width=\textwidth]{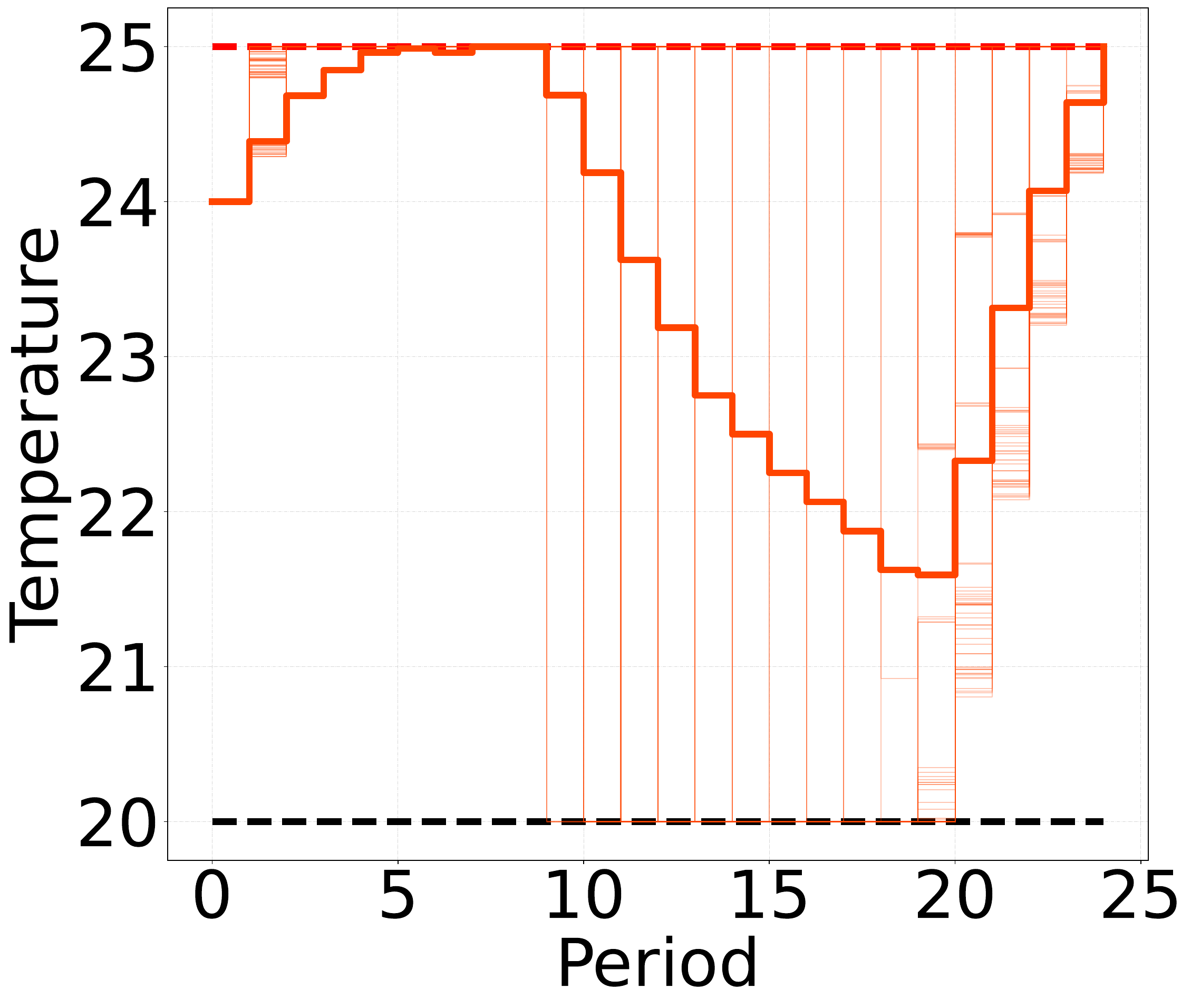}
        \caption{$p^*_4$}
    \end{subfigure}
    \begin{subfigure}[b]{0.19\textwidth}
        \centering
        \includegraphics[width=\textwidth]{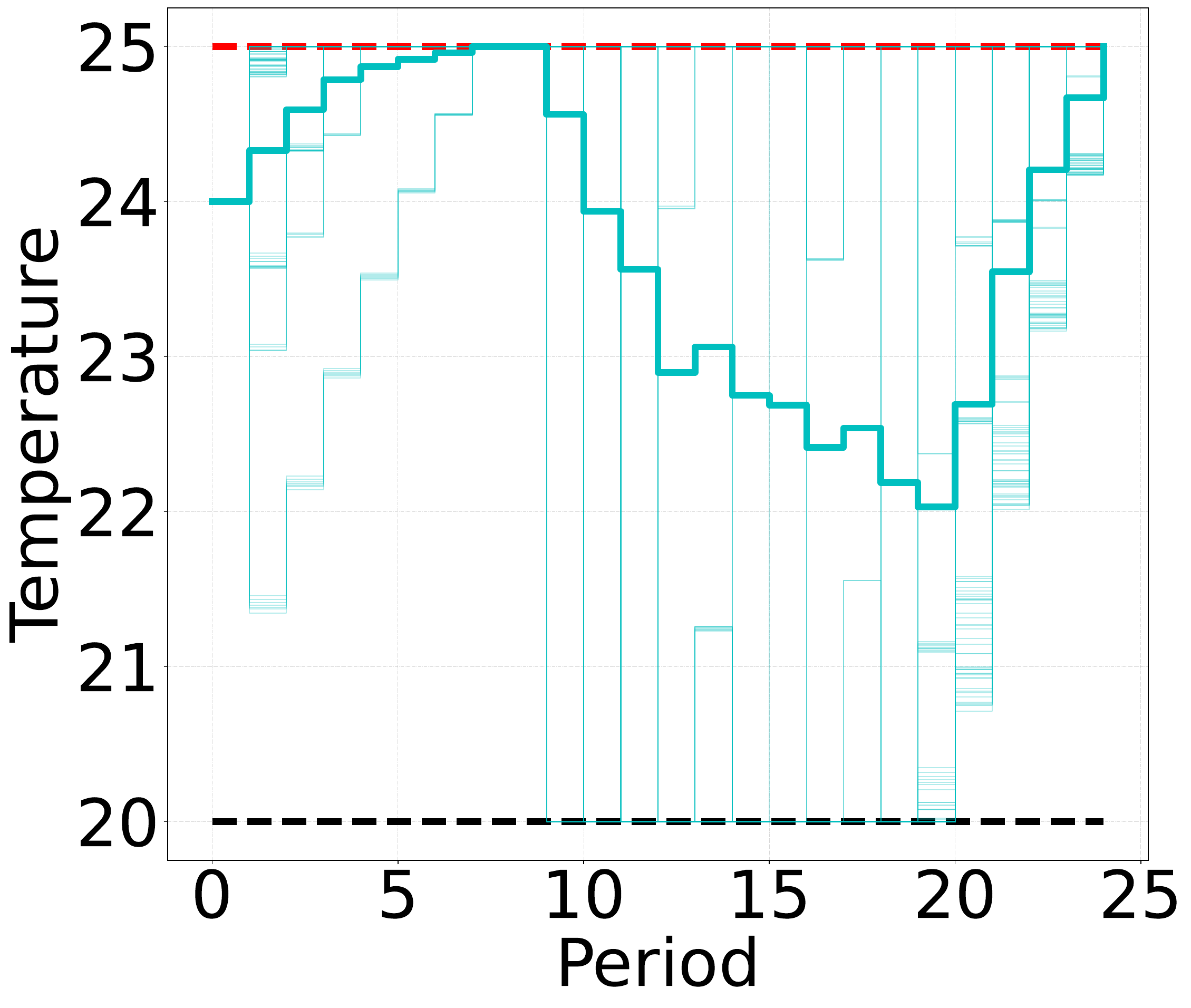}
        \caption{$p^*_\infty$}
    \end{subfigure}
    \caption{Comparison of all consumers' temperature profiles under different pricing strategies, $p^*_s$—the optimal price for the grid risk function $\sigma_s(.)$. This figure provides a detailed (zoomed-in) view of Figure \ref{fig: avg-temp}. Each thin curve represents the indoor temperature dynamics of an individual consumer.}
    \label{fig: temperature_dp_detailed}
\end{figure}

In the next experiment, we evaluate the performance of dynamic prices \( p^*_s \) when applied to different days. The objective is to examine how well the optimal price values, determined based on \textit{mean} outside temperatures and renewable energy availability, perform on subsequent days and their effectiveness in reducing specific grid cost functions. Notably, certain days may be colder or hotter, and we aim to assess the robustness of these pricing schemes. To this end, we consider 45 \textit{summer} days for the aforementioned 80 users, where, for each day, the outside temperature for each user is sampled from
\begin{equation}\label{eq:dist-out}
T^{\mathsf{out}}_i \overset{\mathsf{iid}}{\sim} \mathcal{N}(\hat{\mu}_d^{\mathsf{temp}}, \hat{\Sigma}_d^{\mathsf{temp}})\,.
\end{equation}
We keep the physical thermal properties of the houses, $\alpha$ and $\beta$, unchanged. Each day, we conduct four experiments, observing users' HEMS responses to four pricing signals: \( p^*_1 \), \( p^*_2 \), \( p^*_4 \), and \( p^*_\infty \). Our goal is to assess the effectiveness of \( p^*_s \) in reducing the grid cost function \( \sigma_s(\cdot) \) compared to flat-rate pricing. The exact percentage reductions in cost values compared to flat-rate pricing, calculated across all days and for various grid cost functions, are presented in Figure \ref{fig:heatmap}. In addition, a summary of these results is shown in Figure \ref{fig: violins}, where violin plots illustrate the outcomes for four values of \( s = 1, 2, 4, \) and \( +\infty \) over 45 days. It can be observed that even with fluctuations in outside temperature, the previously computed prices are effective in reducing the grid cost function. This effect becomes more pronounced as \( s \) increases, although with greater variation. We further demonstrate the total demand curves under \( p^*_s \) and flat-rate pricing for all 45 days in Figure \ref{fig: days-demand}. It is evident that the duck-shaped net-demand curve under flat-rate pricing is effectively flattened under dynamic pricing.

Finally, we compute the temperature profiles under \( p^*_s \) for various values of \( s \) and for flat-rate pricing across all 45 days, with the results displayed in Figures \ref{fig: days-temp}. To better illustrate the variation among different days, we separately plot the inside temperature dynamics in Figure \ref{fig: days-temp-zoomed-in}.
We see that our proposed algorithm in fact lowers indoor temperatures during the day when solar
power is available---as one would conceptually expect with pre-cooling.

\begin{figure}[p]
    \centering
    \begin{subfigure}[t]{0.98\textwidth}
        \centering
        \includegraphics[scale=0.1]{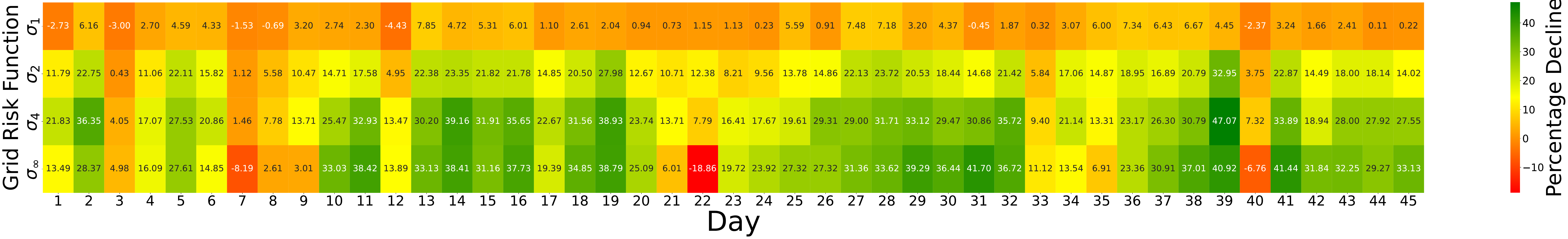}
        \caption{Daily percentage decline in grid cost function $\sigma_s$ when deploying $p^*_s$ compared to flat-rate pricing}
        \label{fig:heatmap}
    \end{subfigure}
    
    \vspace{0.5cm} 
    
    \begin{subfigure}[t]{0.48\textwidth}
        \centering
        \includegraphics[scale=0.21]{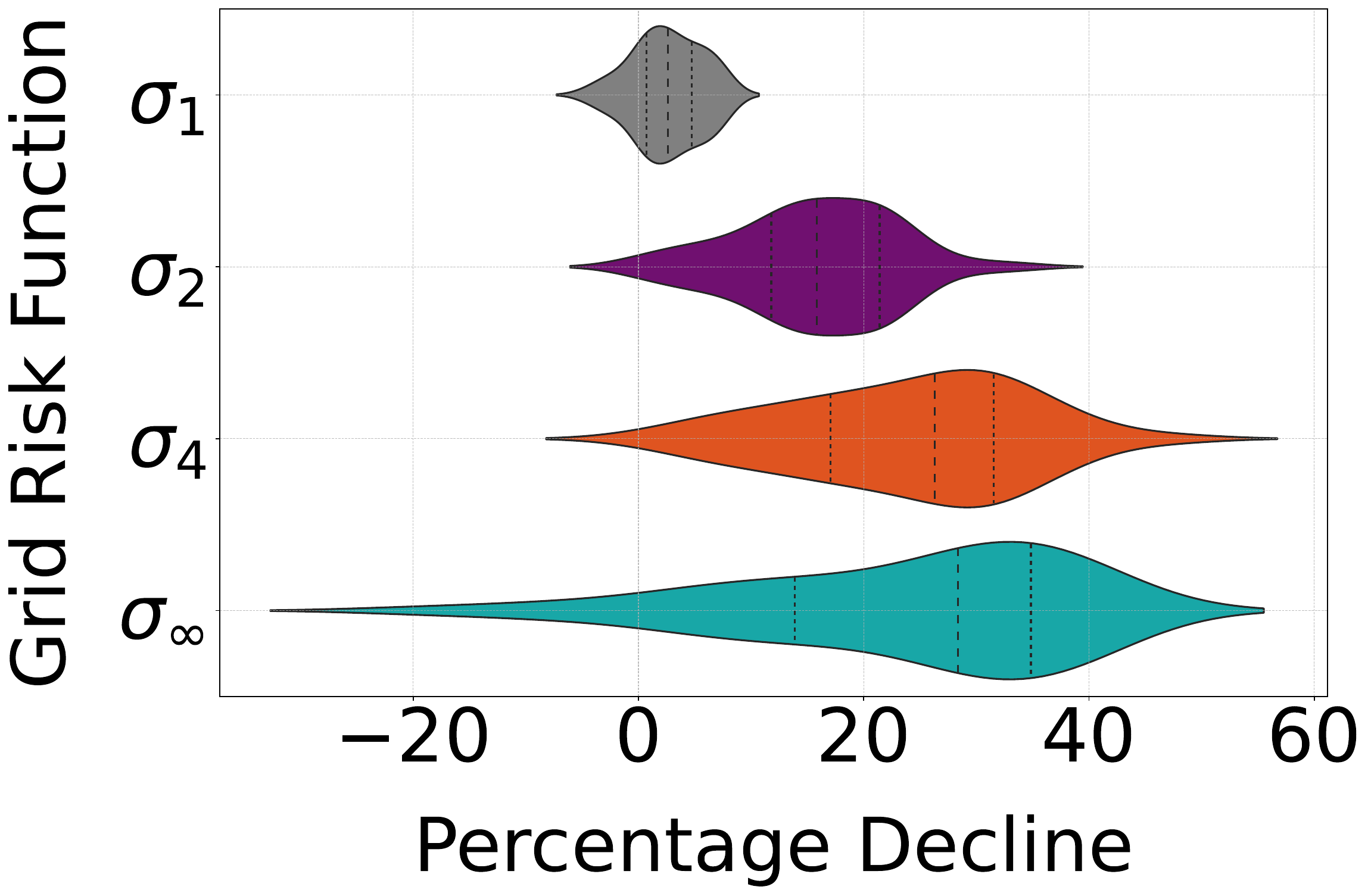}
        \caption{Violin plots of percentage decline in grid risk when using $p^*_s$ compared to flat-rate pricing}
        \label{fig: violins}
    \end{subfigure}
    \hfill 
    \begin{subfigure}[t]{0.48\textwidth}
        \centering
        \includegraphics[scale=0.15]{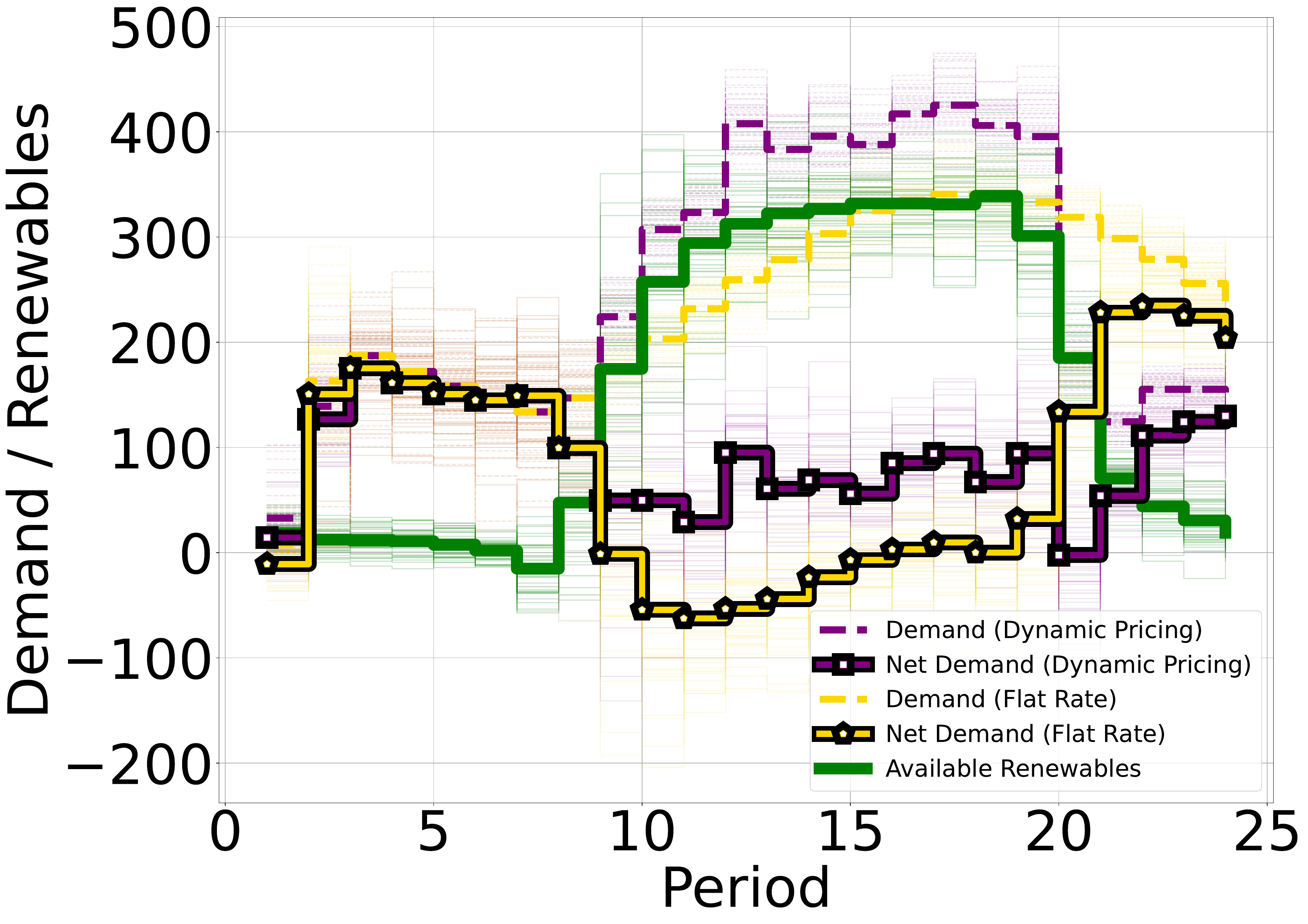}
        \caption{Aggregated user demand and available renewables under $p^*_2$ and flat-rate pricing for 45 days}
        \label{fig: days-demand}
    \end{subfigure}
    
    \caption{Comparison of grid cost, temperature dynamics, and demand profiles under optimal pricing $p^*_s$ and flat-rate pricing over 45 days.
    }
    \label{fig:days}
\end{figure}

\begin{figure}[t]
    \centering
    \begin{subfigure}[l]{0.45\textwidth}
        \centering
        \includegraphics[scale=0.22]{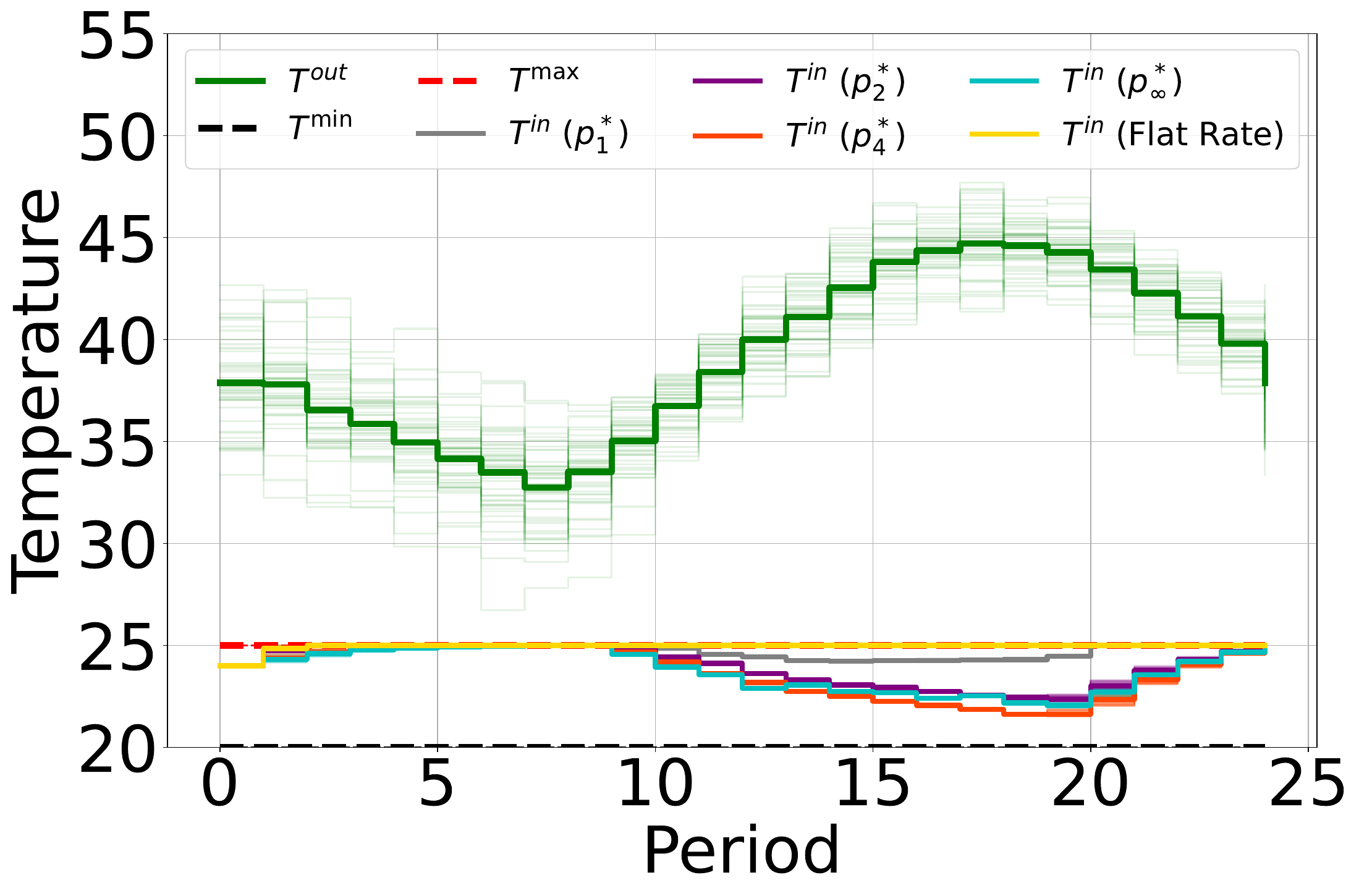}
 \caption{ Outside, min/max, and inside temperature for users under $p^*_s$ and flat-rate pricing over 45 days along with their average values}
        \label{fig: days-temp}
    \end{subfigure}%
    \hspace{0.05\textwidth} 
    \begin{subfigure}[r]{0.45\textwidth}
        \centering
        \includegraphics[scale=0.22]{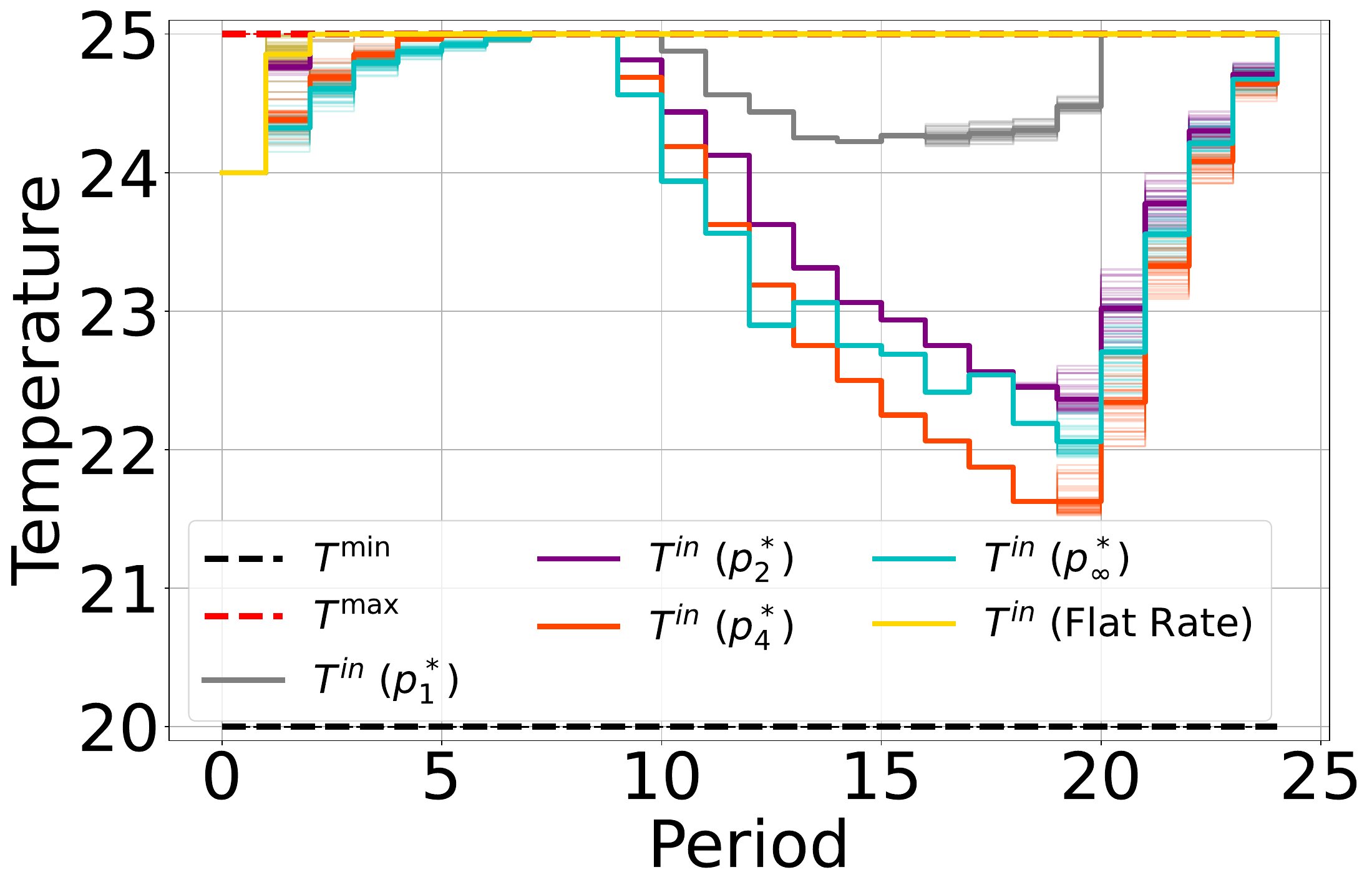}
        \caption{Detailed view of users' indoor temperature dynamics under $p^*_s$ and flat-rate pricing over a 45-day period, including their average values} \label{fig: days-temp-zoomed-in}
    \end{subfigure}
    \caption{Indoor temperature dynamics under dynamic pricing over a 45-day period. In this experiment, dynamic prices are computed based on the average outside temperature and available renewables, and tested over days with new realizations of outside temperature and renewable availability}
\end{figure}

\section{Discussion}
\label{sec:discussion}

In this paper, we demonstrated the potential of price-based demand response mechanism to
optimally leverage flexibility in consumer energy needs to achieve improved grid outcomes.
In a departure from existing work \citep[e.g.,][]{adelman2019dynamic,yang2012game,zhou2017incentive,zhou2019online},
we modeled user flexibility using indifference sets that simply formalize high-level consumer preferences---and
did not require consumers to reason about their full welfare function for energy use.
Our analytic results under this the indifference set model were enabled by fundamental results
on convex analysis for random sets \citep{aumann1965integrals,artstein1975strong}.

One interesting consequence of our indifference set model is that demand response becomes
essentially a pure substitution phenomenon: Under our model, HEMS respond to price signals
by shifting consumption across time periods rather than by curtailing overall consumption levels. We emphasize
that other models of consumer price response do not result in such behavior. For example,
\citet{allcott2011rethinking} reports experimental results in a setting where consumers
directly respond to time-varying energy prices (without using a HEMS or other technology
device as an intermediary), and finds that people do not shift consumption across time:
Instead, they simply reduced their consumption during peak hours. Whether or not consumers
respond to time-varying prices by shifting consumption across time plays a key role in
the overall effect of such programs \citep{jacobsen2020use,schittekatte2024electricity}.
In policy discussions, it is important to distinguish between demand-response programs
that assume consumers have the technological capacity to automatically respond to
prices---as is done here or in, e.g., \citet{adelman2019dynamic} and \citet{zhou2019online}---versus
programs where prices vary but consumers are simply expected to figure out how
to respond to them on their own \citep{allcott2011rethinking,cosmo2014estimating}.

There are a number of questions still left open in this paper. Here, we take the consumer
indifference sets, the total renewable production and the grid cost function as
given. However, if our proposed pricing mechanism were adopted, all these elements
may change endogenously. For example, if shifting energy use away from peak hours
becomes both practical and profitable for consumers, they may be willing to increase
the size of their indifference sets and thus provide further flexibility to the system;
and shifts in consumer demand patterns may open the door to increased renewable adoption.
It would be of considerable interest to study general-equilibrium implications of our
proposed demand-response mechanism. Meanwhile, on a practical level, it's clear that
optimal time-varying prices may be different across days, depending on a number
of factors such as weather, length of daylight and the day of the week. We are actively
working on extensions of our learning algorithm that can accommodate and respond to
such heterogeneity across days.

\bibliographystyle{plainnat}
\bibliography{mybib}

\newpage
\appendix

\section{Remaining Proofs}

\subsection{Proof of Proposition \ref{propo:warmup}}\label{proof: gap}
An indifference set $R$ with parameters $(a,b)$ can be reformulated as
\begin{equation}\label{eq: example-R}
R=\left\{(w a,(1-w) b) : w\in [0,1] \right\}\,.
\end{equation}
For a price signal $p=(p_1,p_2)$, each HEMS solves the following optimization
\[
w^*=\arg\min\limits_{w\in [0,1]}^{} \{ w a p_1 + (1-w) b p_2 \} \,.
\]
This gives the consumption profile $q(p)$ given in \eqref{eq:  smart-device} as 
\[
q(p)=(w^*a, (1-w^*)b)\,.
\]
In this case, the optimal solution $w^*$ is easy to characterize and is given by
\[
w^*=\begin{cases}
1\,,& ap_1< bp_2\,,\\
[0,1]\,, & a p_1=bp_2\,,\\
0\,,& ap_1>bp_2\,.
\end{cases}
\]
We suppose that users when they are indifferent, i.e., when $a p_1=bp_2$, they select period one ($w^*=1$). For an indifference set $R_i$ with parameters $(a_i,b_i)$ this yields
\[
q_i(p)=\Big( \ind(a_i p_1\le b_ip_2) a_i, \ind(a_i p_1> b_ip_2) b_i\Big)\,.
\]
Having $\bar{q}=\frac{1}{n}\sum\limits_{i=1}^n q_i(p)$, the average grid cost function for the price $p$ is $\|\bar{q}\|_{\infty}$, this allows us to characterize the grid risk under the price $(p_1,p_2)$ as the following:
\begin{align*}
\frac{1}{n}\hopt_n^{\mydp}(p)=\frac{1}{n} \max\bigg(\sum\limits_{i=1}^{n} \ind(a_i p_1\le b_ip_2) a_i, \sum\limits_{i=1}^{n} \ind(a_i p_1> b_ip_2) b_i\bigg)\,.
\end{align*}
The above formulation reveals that the grid cost function is scale-invariant with respect to price $p=(p_1,p_2)$, where for any $c>0$, using $cp$ yields the same grid cost value. In addition, if $p_1p_2\le 0$, then it is straightforward to observe that by having the non-positive element to zero, we obtain the same grid risk value. Put all together, to obtain the best price signal, we can focus on the following two classes of prices: i) all $p$ that $p_1+p_2=1$, for $p_1,p_2\ge 0$, and ii) all $p$ that $p_1+p_2=-1$, for $p_1,p_2\le 0$. The first class can be parametrized by a one-dimensional parameter $0\le z\le  1$ where $p_1=z$ and $p_2=(1-z)$, using this in the above yields
\[
U_n^+=\min\limits_{0\le z\le 1}^{}U_n^+(z)\,, \quad \frac{1}{n}U_n^+(z)=\frac{1}{n} \max\bigg(\sum\limits_{i=1}^{n} \ind\big(a_i z\le b_i(1-z)\big) a_i, \sum\limits_{i=1}^{n} \ind\big(a_i z> b_i(1-z)\big) b_i\bigg)\,.
\]
In addition, for the class of price signals $p_1+p_2=-1$ with $p_1,p_2\le 0$, we can consider $p_1=-z$, $p_2=-1+z$ for $z\in [0,1]$, using this in the formulation of $\hopt_n^{\mydp}(p)$ brings us
\[
U_n^-=\min\limits_{0\le z\le 1}^{}U_n^-(z)\,, \quad \frac{1}{n}U_n^-(z)=\frac{1}{n} \max\bigg(\sum\limits_{i=1}^{n} \ind\big(-a_i z\le b_i(-1+z)\big) a_i, \sum\limits_{i=1}^{n} \ind\big(-a_i z> b_i(-1+z)\big) b_i\bigg)
\]
Combining the above two classes of prices, then we can consider 
\[
\min\limits_{p}\hopt_n^{\mydp}(p)=\min\limits_{z\in [0,1]}U_n(z)\,, \quad U_n(z)=\min(U_n^+(z),U_n^-(z))\,.
\]
We then focus on computing the risk of the direct method. For this end, we recall the indifference set formulation \eqref{eq: example-R} and let the user $i$ be parameterized by $w_i\in [0,1]$, in this case we arrive at
\begin{align*}
\frac{1}{n}\hopt_n^{\myom}=\min\limits_{w_i\in [0,1]\,, \forall i\in [n]}^{}  \left\|\frac{1}{n}\sum\limits_{i=1}^{n}\begin{bmatrix} w_i a_i\\ (1-w_i) b_i \end{bmatrix} \right\|_{\ell_{\infty}}\,.
\end{align*}
We next use the dual formulation of $\ell_\infty$-norm to get 
\begin{align*}
\frac{1}{n}\hopt_n^{\myom}=\min\limits_{w_i\in [0,1]\,, \forall i\in [n]}^{}  \max\limits_{\|r\|_{\ell_1}\le 1}^{}\frac{1}{n}\sum\limits_{i=1}^{n}
\left(r_1 w_i a_i + r_2 (1-w_i) b_i  \right)\,.
\end{align*}
Given that the above objective function is bilinear in $r$ and $w_i$'s, we can change min/max with max/min and arrive at
\[
\frac{1}{n}\hopt_n^{\myom}=\max\limits_{\|r\|_{\ell_1}\le 1}^{}\min\limits_{w_i\in [0,1]\,, \forall i\in [n]}^{}\frac{1}{n}\sum\limits_{i=1}^{n}
\left(r_1 w_i a_i + r_2 (1-w_i) b_i  \right)\,.
\]
As the objective function is decoupled with respect to variables $w_i$, we can take the minimization inside and get
\[
\frac{1}{n}\hopt_n^{\myom}=\max\limits_{\|r\|_{\ell_1}\le 1}^{}\frac{1}{n}\sum\limits_{i=1}^{n}
\min\limits_{w_i\in [0,1]}^{}\left(r_1 w_i a_i + r_2 (1-w_i) b_i  \right)\,.
\]
The inside optimization is easy to characterize, where the optimal solution is given by
\[
w^*_i=\begin{cases}1, &r_1a_i\le r_2 b_i,\\
0, &r_1a_i> r_2 b_i\,.
  \end{cases}
\]
Using this yields
\[
\frac{1}{n}\hopt_n^{\myom}=\max\limits_{\|r\|_{\ell_1}\le 1}^{}\frac{1}{n}\sum\limits_{i=1}^{n}\left(r_1a_i\ind(r_1 a_i \le r_2 b_i)+r_2 b_i \ind(r_1 a_i > r_2 b_i) \right)\,.
\]
In the next step, we first note that given the non-negative values for $a_i,b_i$, it is easy to observe that $r_1,r_2$ to maximize the above objective must be non-negative, and the constraint can be written as $r_1+r_2=1$, for $r_1,r_2\ge0$. In addition,  it can be seen that the optimal $r^*$ must be on the boundary of unit $\ell_1$-ball, otherwise if $\|r^*\|_{\ell_1}<1$, we can easily consider $\tilde{r}=r^*/\|r^*\|_{\ell_1}$ and strictly increase the value of the objective function. This enables us to focus on all $r$ values that $r_1+r_2=1$,  by considering $z\in [0,1]$ and letting 
$r_1=z, r_2=(1-z)$, this brings us 
\begin{align*}
\frac{1}{n}\hopt_n^{\myom}&=\max\limits_{0\le z\le 1}^{} \frac{1}{n}\sum\limits_{i=1}^{n}\left( (1-z)b_i\ind\Big((1-z) b_i< z a_i \Big)+ z a_i \ind\Big((1-z)b_i \ge z a_i \Big)\right)\,.
\end{align*}
This completes the proof.

\subsection{Characterizing $\cP$}
\begin{lemma}\label{lemma: rho-ball}
For the grid const function $\rho(.)$ given in Assumption \ref{assum:sigma}, let  
\[
\cP\overset{\Delta}{=}\left\{p\in \reals^d: p^\sT r \le \rho(r)\,,\quad  \forall r\in \reals^d\right\}\,. 
\]
Then,  $\cP$ is a compact convex set, and its support function is $\rho(.)$. More precisely, we have
\[
\rho(q)=\max\limits_{z\in \cP}^{}z^\sT q\,,\quad \forall q\in \reals^d\,.
\]
In addition, $\rho$ is Lipschitz continuous with parameter $\|\cP\|$, where 
\[
|\rho(q_1)-\rho(q_2)|\le \|\cP\| \|q_1-q_2\|\,, \quad \forall q_1,q_2\in \reals^d\,.
\]
\end{lemma}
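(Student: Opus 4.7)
The plan is to verify the three claims in turn: convexity/compactness of $\cP$, the support-function identity, and Lipschitz continuity. The main obstacle is the non-trivial direction of the support-function identity, which is essentially a biduality statement; I will prove it using subgradients combined with positive homogeneity, thereby avoiding any appeal to general Fenchel biconjugation machinery.

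First, $\cP$ is defined as the intersection over $r \in \RR^d$ of the closed half-spaces $\{p : p\cdot r \le \rho(r)\}$, hence closed and convex. For boundedness, note that $\rho$ is finite and convex on all of $\RR^d$, and therefore continuous; in particular $M := \sup_{\|r\|=1} \rho(r) < \infty$. Any $p \in \cP$ satisfies $p\cdot r \le \rho(r) \le M$ on the unit sphere, so taking $r = p/\|p\|$ gives $\|p\| \le M$. Thus $\cP$ is compact.

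Next, I will show $\rho(q) = \max_{z \in \cP} z \cdot q$ for all $q \in \RR^d$. The inequality $\max_{z \in \cP} z \cdot q \le \rho(q)$ is immediate from the definition of $\cP$. For the reverse inequality, fix $q$ and pick any subgradient $z \in \partial \rho(q)$, which exists because $\rho$ is finite and convex. The subgradient inequality reads
\begin{equation*}
\rho(q') \ge \rho(q) + z \cdot (q' - q) \qquad \text{for all } q' \in \RR^d.
\end{equation*}
Applying this to $q' = \alpha q$ for arbitrary $\alpha > 0$ and invoking positive homogeneity gives $\alpha \rho(q) \ge \rho(q) + (\alpha - 1) z \cdot q$, which forces $z \cdot q = \rho(q)$ (take $\alpha \to 0^+$ and $\alpha \to \infty$). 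Substituting $z \cdot q = \rho(q)$ back into the subgradient inequality yields $z \cdot q' \le \rho(q')$ for all $q'$, so $z \in \cP$. Hence $\max_{z \in \cP} z \cdot q \ge z \cdot q = \rho(q)$, which completes the identity.

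Finally, for Lipschitz continuity, write $\|\cP\| = \sup_{z \in \cP} \|z\|$, which is finite by compactness. For any $q_1, q_2 \in \RR^d$, let $z^* \in \argmax_{z \in \cP} z \cdot q_1$; then
\begin{equation*}
\rho(q_1) - \rho(q_2) \le z^* \cdot q_1 - z^* \cdot q_2 = z^* \cdot (q_1 - q_2) \le \|\cP\| \cdot \|q_1 - q_2\|,
\end{equation*}
and swapping the roles of $q_1$ and $q_2$ yields the two-sided bound $|\rho(q_1) - \rho(q_2)| \le \|\cP\| \cdot \|q_1 - q_2\|$, concluding the proof.
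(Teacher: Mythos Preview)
Your proof is correct. The treatment of convexity, compactness, and Lipschitz continuity follows essentially the same path as the paper (and your Lipschitz argument is in fact a bit cleaner). The one place where you take a genuinely different route is the support-function identity $\rho(q)=\max_{z\in\cP}z\cdot q$. The paper argues indirectly: it defines the gap $f(r)=\sup_{p\in\cP}p^\sT r-\rho(r)$, observes that $f$ is positively homogeneous of degree one and hence can only take the values $-\infty,0,+\infty$, and then uses the crude bounds $-2M\|r\|\le f(r)\le 0$ to force $f\equiv 0$. You instead proceed constructively: you pick $z\in\partial\rho(q)$, use homogeneity along the ray through $q$ to pin down $z\cdot q=\rho(q)$, and then read off $z\in\cP$ from the subgradient inequality. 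Your approach has the advantage of explicitly exhibiting a maximizer (any subgradient of $\rho$ at $q$) and of establishing $\cP\neq\emptyset$ as a byproduct, whereas the paper's argument tacitly relies on $\cP$ being non-empty when it bounds $\sup_{p\in\cP}p^\sT r$. The paper's homogeneity trick, on the other hand, avoids invoking the existence of subgradients and is perhaps marginally more self-contained.
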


\begin{proof}
For the boundedness of $\cP$, note that if $\tp\in \cP$ by using $r=\tp$ we arrive at $ \|\tp\|_2^2 \le \rho(\tp)$. Using positive homogeneity of $\rho$ brings us
\[
\|\tp\|_2\le \frac{\rho(\tp)}{\|\tp\|_2}=\rho\left(\frac{\tp}{\|\tp\|_2}\right)\le \sup\limits_{x\in \mathbb{S}^{d-1}}^{}\rho(x)\,.
\] 
Given the continuity of $\rho$, on the unit sphere $\mathbb{S}^{d-1}$ we realize that $\rho$ is bounded. This implies that there exists a real-valued $M$ such that $\sup\limits_{x\in \mathbb{S}^{d-1}}^{}\rho(x)\le M$. Using this in the above, we get that for every $\tp\in \cP$ we have $\|\tp\|\le M$, this establishes the boundedness of $\cP$.  In addition, if $\{p_n\}_{n\ge 1}$ is a sequence of points converging to $p$ such that $p_n\in \cP$, it is straightforward to get that $p\in \cP$, showing the closeness of $\cP$. Put all together, it can be observed that $\cP$ is a compact set. 

To show convexity, note that if $p_1,p_2\in \cP$, then for every $\alpha\in (0,1)$, given that $p_1^\sT r\le \rho(r)$, and $p_2^\sT r \le \rho(r)$, then we have that $(\alpha p_1+(1-\alpha) p_2)^\sT r \le \rho(r)$, so $\alpha p_1+(1-\alpha) p_2  \in \cP$. 

We next show that  the support function of $\cP$ is $\rho(.)$. For this end, define the function $f(r)=\sup\limits_{p\in \cP}^{} (p^\sT r -\rho(r))$. For a fixed value of $r$, and a positive $\alpha$, we have the following
 \begin{align*}
f(\alpha r) &=\sup\limits_{p\in \cP}^{} (p^\sT (\alpha r) -\rho(\alpha r))\\ 
&=\alpha \sup\limits_{p\in \cP}^{} (p^\sT r - \rho(r))\\
&=\alpha f(r)\,.
 \end{align*}
 where in the penultimate relation we used the homogeneity of $\rho$. As this holds for every $\alpha\ge 0$ then we realize that $f(r)$ must be either $-\infty, 0,$ or $+\infty$. By definition of $\cP$, if $p\in \cP$, we know that $f(r)\le 0$, removing the $+\infty$ choice, implying that $f(r)$ can be $0$ or $-\infty$. We also have
 \begin{align*}
 \rho(r)&=\|r\|\rho\Big(\frac{r}{\|r\|}\Big)\\
 &\le \|r\| \sup\limits_{x\in \mathbb{S}^{d-1}}^{} \rho(x)\\
 &\le M\|r\|\,.
 \end{align*}
 In addition, as $|\sup\limits_{p\in \cP}^{} p^\sT r |\le \|r\|\|\cP\|\le M \|r\| $, we realize that $-2M\|r\|\le f(r)\le 0$, therefore $f(r)$ for every fixed $r$ can not be $-\infty$, therefore it is zero. This implies that $\sup\limits_{p\in \cP}^{} p^\sT r= \rho(r)$, meaning that $\rho(.)$ is the support function of $\cP$. For the Lipschitz property, for $i=1,2$ let
 \[
p^*_i=\arg\min\limits_{p\in \cP}^{}p^\sT q_i\,.
\]
Using this definition brings us 
\begin{align*}
\rho(q_1)-\rho(q_2)&=q_1^\sT p^*_1-q_2^\sT p^*_2\\
&=q_1^\sT (p^*_1-p^*_2)+ (q_1-q_2)^\sT p^*_2\\
&\le (q_1-q_2)^\sT p^*_2\\
&\le \|\cP\| \|q_1-q_2\|\,.
\end{align*}
By a similar chain of inequalities it can be shown that  $-\|\cP\| \|q_1-q_2\| \le \rho(q_1)-\rho(q_2)$, this completes the proof. 
\end{proof}

\subsection{Proof of Theorem \ref{thm:stable}}
We have
\begin{equation}
\Theta_i(z^*) = \cb{q \in \bR_i: q^\sT z^* = h(z^*;\bR)}.
\end{equation}
In the next step, by the Levin--Valadier theorem \citep[Theorem 7.22]{shapiro2021lectures}, given that
$h(.;\bR_i)$ is concave we have
\[
\partial_z h(z^*;\bR_i) = \conv\p{\Theta_i\p{z^*}}.
\]
In addition, under Assumption \ref{assum: diff-basic}, we have that $h(.;\bR_i)$ is differentiable at $z^*$ almost surely. By using this in the above, we realize that $\Theta_i(z^*)$ must be a singleton, given by $\Theta_i(z^*)=\{\nabla h(z^*; \bR_i)\}$.
We now move to prove the second part. From the definition of $H(.)$ we know that $H(z^*)=h(z^*;\E[\bR])$, therefore $\Theta(z^*)$ can be expressed as
\begin{equation}
\Theta(z^*) = \cb{q : q \in \EE{\bR_i}, \, q \cdot z^* = H(z^*)}.
\end{equation}
We once more apply the Levin–Valadier Theorem for the concave function $H(z)$ to arrive at
\[
\partial H(z^*) = \conv\p{\Theta\p{z^*}}.
\]
As $H(\cdot)$ is differentiable at $z^*$ as established in
Theorem \ref{thm: zero-gap-full}, $\Theta(z^*)$ must be a singleton, and $\Theta(z^*)=\{\nabla H(z^*)\}$, therefor $\bar{q}(z^*)=\nabla H(z^*)$. In addition, from Lemma \ref{lemm:H} we have $\E[h(z;\bR)]=h(z;\E[\bR])$, therefore $H(z)=\E[h(z;\bR)]$. We then use the concave property of $h(.;\bR)$, and $H(.)$ in conjunction with Theorem 7.46 of \citet{shapiro2021lectures} to get $\nabla H(z^*)=\E[\nabla h(z^*;\bR)]$. Combining this result with $\Theta_i(z^*)=\{\nabla h(z^*; \bR_i)\}$ implies that $\E[\Theta_i(z^*)]=\Theta(z^*)$.

\subsection{Proof of Corollary \ref{coro:nonzero}}
 From Theorem \ref{thm:stable} we have that $\bar{q}(z^*)$ is the unique solution for a consumer HEMS optimization of average indifference set $\E[\bR]$, formally 
 \[
 \bar{q}(z^*)=\arg\min\limits_{\E[\bR]} q^\sT z^*\,.
 \] 
 In addition, from the above optimization characterization  we know that $\bar{q}(z^*)\in \partial H(z^*)$ (see arguments used in \eqref{eq: q_p_subgradient}). Given the differentiablity of $H(.)$ at $z^*$ (see Assumption \ref{assum: diff-basic}), we obtain $\bar{q}(z^*)=\nabla H(z^*)$\,. We next write the first order optimality condition for the concave function $H(z)-z^\sT q_0$ over $\cP$, we arrive at the following
\[
(\nabla H(z^*)-q_0)^\sT(z-z^*)\le 0\,,\quad \forall z\in \cP\,.
\]  
This reads as
\[
 {z^*}^{\sT} (\nabla H(z^*)-q_0)\ge z^\sT (\nabla H(z^*)-q_0)\,, \quad  \forall z\in \cP\,.
\]
As this holds for all $z\in \cP$, we obtain
\[
{z^*}^{\sT} (\nabla H(z^*)-q_0)\ge \max \limits_{z\in \cP}^{} z^\sT (g^*-q_0)\,.
\]
In addition, the above maximization is achieved at $z^*\in \cP$, therefore it must be an equality, and can be rewritten as
\[
{z^*}^{\sT} (\nabla H(z^*)-q_0)=\max \limits_{z\in \cP}^{} z^\sT (\nabla H(z^*)-q_0)\,.
\]
In the next step, we deploy Lemma \ref{lemma: rho-ball} in the above to obtain
\begin{equation}\label{eq: g-star-equality}
{z^*}^{\sT} (\nabla H(z^*)-q_0)=\rho(\nabla H(z^*)-q_0)\,.
\end{equation}
We next use $\bar{q}(z^*)=\nabla H(z^*)$ in the above, and this amounts to $U(z^*)=\rho(\bar{q}(z^*)-q_0)$.
We next use $q_0\notin \E[\bR]$ to get that $\bar{q}(z^*)-q_0$ is not zero, accordingly $\rho(\bar{q}(z^*)-q_0)$ is not zero. Given that $\rho$ is non-negative valued, therefore $\rho(\bar{q}(z^*)-q_0)>0$. Plugging this into the above, yields $U(z^*)>0$, and completes the proof.

\subsection{Proof of Corollary \ref{coro:positive}}
For a vector $u\in \reals^d$, we adopt the following shorthand $u_+=\max(u,0)$, where the maximum is taken componentwise. We first claim that if $p\in \cP$, then $p_+$ also belongs to $\cP$. To show this, from Lemma \ref{lemma: rho-ball} we must show that 
\[
p_+^\sT r\le \rho (r)\,,\, \forall r\in \reals^d\,.
\]
We first note that 
\begin{align*}
p_+^\sT r&=\sum\limits_{\ell: p_\ell\ge 0} p^*_\ell r_\ell\\
&=\sum\limits_{\ell: p_\ell\ge 0, r_\ell \le 0} p^*_\ell r_\ell + \sum\limits_{\ell: p^*_\ell\ge 0, r_\ell\ge 0} p^*_\ell r_\ell\\
&\le \sum\limits_{\ell: p^*_\ell\ge 0, r_\ell\ge 0} p^*_\ell r_\ell\,.
\end{align*}
We then consider $r'$ where for $\ell\in [d]$, its $\ell$-th coordinate is defined as follows
\[
r'_\ell=\begin{cases}
r_\ell\,, & p_\ell \ge 0, r_\ell\ge 0\,,\\ 
0\,, & \text{o.w}\,.\\
\end{cases}
\]
From the above formulation, it is easy to observe that $r'^\sT p^*=\sum\limits_{\ell: p^*_\ell\ge 0, r_\ell\ge 0} p^*_\ell r_\ell$, therefore by using this in the above we get 
\[
p_+^\sT r\le r'^\sT p^*\,.
\]
In addition, it is easy to observe that $r'$ is such that either $0\le r'_\ell\le r_\ell$, or $r'_\ell=0$ when $r_\ell<0$. Therefore, it satisfies the monotonicity condition, and therefore $\rho(r')\le \rho(r)$. 
Put all together, we arrive at
\[
p_+^\sT r\le r'^\sT p^*\le \rho(r')\le \rho(r)\,,
\]
where $r'^\sT p^*\le \rho(r')$ follows from $p^*\in \cP$. 
As this holds for all $r$, therefore $p_+$ must belong to $\cP$. We now complete the argument and show that for $p^*$, if we consider $p=p^*_+$, then the objective function $H(p)-p^\sT q_0=\min\limits_{\E[\bR]}(q-q_0)^\sT p$ will not decrease, compared to $p=p^*$. This in conjunction with the fact that $p^*_+\in \cP$ implies that $p^*$ must have all non-negative coordinates, and completes the proof.
For this end, we consider $r=q-q_0$ for some $q\in \E[\bR]$, we then have
\begin{align*}
r^\sT p^*_+&=\sum\limits_{\ell: p^*_\ell\ge 0}^{} p^*_\ell r_\ell\\
&\ge \sum\limits_{\ell}^{} p^*_\ell r_\ell\,,
\end{align*}
where the last relation follows from the Assumption \ref{assum: monotone} that $q\ge q_0$ (componentwise), thus $r$ has non-negative coordinates. This implies that $r^\sT p^*_+\ge r^\sT p^*$, therefore 
\[
\min\limits_{\E[\bR]}^{} (q-q_0)^\sT p_+^* \ge \min\limits_{\E[\bR]}^{} (q-q_0)^\sT p^*\,.
\]
This completes the proof.


\subsection{Proof of Theorem \ref{thm:empirical_max}}
For random convex sets $\{\bC_i\}_{1\le i\le N}$ in $\mathfrak{C}_d$, from the alomst sure convergence result \eqref{eq: as-hasudorf} we have
\[
\lim\limits_{n\to \infty}^{}\sH\left(\frac{1}{n}\bigoplus\limits_{i=1}^{n}\bC_i ,\E[\bC]  \right)=0\,.
\]

In addition, for two convex sets $C_1, C_2 \in \mathfrak{C}_d$, the Hausdorff distance between $C_1,C_2$ can be written as the infinity norm of their support functions, formally we have
\[
\sH(C_1,C_2) = \sup\limits_{u \in \mathbb{S}^{d-1}}^{}\left|{\delta(u;C_1) - \delta(u;C_2)}\right|\,.
\]
Using this in the above, yields
\[
\lim\limits_{n\to \infty}^{}\sup\limits_{u\in \mathbb{S}^{d-1}}^{}\left|\delta\Big(u; \frac{1}{n}\bigoplus\limits_{i=1}^{N}\bC_i\Big) -\delta(u;\E[\bC])  \right|=0\,.
\]
We next use the linearity of support functions for Minkowski sum, where formally for convex sets $C_1,C_2$ we have 
\[
\delta(u;C_1\oplus C_2)=\delta(u;C_1)+\delta(u;C_2)\,,\quad  \forall u\in \mathbb{S}^{d-1}\,.
\]

This gives us
\begin{equation}
\lim\limits_{n\to \infty}^{}\sup\limits_{u \in \mathbb{S}^{d-1}}\bigg|\frac{1}{n}\sum\limits_{i=1}^{n}\delta\Big(u;\bC_i\Big) -\delta(u;\E[\bC])\bigg| = 0\,.\label{eq: a.s banach}
\end{equation}
We next use $h(z;\bR)=-\delta(-z;\bR)$, in conjunction with the definition of $H(z)=h(z;\E[\bR])$, and the fact that support function of a set and its convex hull are equal, i.e., $\delta(z;\bR)=\delta(z;\conv(\bR))$. This brings us
\begin{equation*}
\lim\limits_{n\to \infty}^{}\sup\limits_{u \in \mathbb{S}^{d-1}}\bigg|\frac{1}{n}\sum\limits_{i=1}^{n}h\Big(u;\bR_i\Big) -H(u)\bigg| = 0\,.\label{eq: a.s banach}
\end{equation*}
 We next deploy the positive homogeneity of function $h(.,\bR)$, and its mean-field value $H(.)$ to obtain 
\begin{align*}
\sup\limits_{z \in \cP}\bigg|\frac{1}{n}\sum\limits_{i=1}^{n}h\Big(z;\bR_i\Big) -H(z)\bigg|&=\sup\limits_{z \in \cP}\|z\|\bigg|\frac{1}{n}\sum\limits_{i=1}^{n}h\Big(\frac{z}{\|z\|};\bR_i\Big) -H\Big(\frac{z}{\|z\|}\Big)\bigg|\\
&\le \sup \limits_{z\in \cP} \|z\| \sup\limits_{u \in \mathbb{S}^{d-1}}\bigg|\frac{1}{n}\sum\limits_{i=1}^{N}h\Big(u;\bR_i\Big) -H(u)\bigg|\,.
\end{align*}
Using this in the above in along with the boundedness of $\|\cP\|$ (see Lemma \ref{lemma: rho-ball}) gives us

\begin{equation}
\lim\limits_{n\to \infty}^{}\sup\limits_{z \in \cP}\bigg|\widehat{H}_n(z) -H(z)\bigg| = 0\,.\label{eq: a.s banach}
\end{equation}
We then let $z^*\in \arg\max\limits_{\cP}^{} (H(z)-z^\sT q_0)$, it can be observed that $(H(z^*)-q_0^\sT z^*)=\mathsf{R}^\myom(q_0)$. In the next step, having established the uniform convergence of the sample average of users's cost function to their mean-field function $H(.)$, we write the following chain of relations with the new notation $q_{0,n}=Q_0/n$:
\begin{align*}
(H(z^*)-q_0^\sT z^*)-(H(\hz_n)-\hz_n^\sT q_0)&=
(H(z^*)-q_0^\sT z^*)-(\widehat{H}_n(z^*)- {q}_{0,n}^\sT z^*)\\
&\,~ + (\widehat{H}_n(z^*)- {q}_{0,n}^\sT z^*)- (\widehat{H}_n(\hz_n)- {q}_{0,n}^\sT \hz_n)\\
&\,~ + (\widehat{H}_n(\hz_n)- {q}_{0,n}^\sT \hz_n)-(H(\hz_n)-\hz_n^\sT q_0)\,.
\end{align*}
We next use the definition of $\hz_n$ as maximizer of $\widehat{H}_n(z)-z^\sT q_{0,n}$ to get
\begin{align*}
(H(z^*)-q_0^\sT z^*)-(H(\hz_n)-\hz_n^\sT q_0)&\le
(H(z^*)-q_0^\sT z^*)-(\widehat{H}_n(z^*)- {q}_{0,n}^\sT z^*)\\
&\,~ + (\widehat{H}_n(\hz_n)- {q}_{0,n}^\sT \hz_n)-(H(\hz_n)-\hz_n^\sT q_0)\,.
\end{align*}
Finally, we deploy the uniform convergence result of \eqref{eq: a.s banach} in along with almost sure convergence $Q_0/n\to q_0$ to realize that the above right-hand side in the mean-field is non-positive, thereby 
\[
\lim\limits_{n\to \infty}(H(\hz_n)-\hz_n^\sT q_0) \ge H(z^*)-q_0^\sT p^*\,.
\]
Given that $z^*$ is a maximizer of $H(z)-q_0^\sT z$ over $\cP$, combining this with the above we realize that with probability one the following holds
\begin{equation}\label{eq: a.s hp_n}
\lim\limits_{n\to \infty}(H(\hz_n)-q_0^\sT \hz_n) = H(z^*)-q_0^\sT z^*\,.
\end{equation}

\end{document}